\documentclass[11pt]{article}
\usepackage{paper,asb}

\usepackage{amsmath}
\usepackage{amsfonts}
\usepackage{amssymb}
\usepackage{amsthm}
\usepackage{hyperref}
\usepackage{comment}
\usepackage[capitalise]{cleveref}
\crefname{subsection}{Subsection}{Subsections}

\newtheorem{theorem}{Theorem}
\newtheorem{lemma}[theorem]{Lemma}
\newtheorem{corollary}[theorem]{Corollary}
\newcommand{\papertitle}{Balancing Communication and Computation in Gradient Tracking Algorithms for Decentralized Optimization}
\newcommand{\paperauthora}{Albert S. Berahas}
\newcommand{\paperauthoraaffiliation}{Department of Industrial and Operations Engineering, University of Michigan}

\newcommand{\paperauthorb}{Raghu Bollapragada}
\newcommand{\paperauthorbaffiliation}{Operations Research and Industrial Engineering Program, University of Texas at Austin}

\newcommand{\paperauthorc}{Shagun Gupta}

\usepackage{booktabs}
\usepackage{caption}
\usepackage{float}
\usepackage{titlesec}
\usepackage{capt-of}

\usepackage{array}
\usepackage{arydshln}
\setlength\dashlinedash{0.2pt}
\setlength\dashlinegap{1.5pt}
\setlength\arrayrulewidth{0.3pt}

\begin{document}
\title{\papertitle}

\author{\paperauthora\footnotemark[1]
   \and \paperauthorb\footnotemark[2]
   \and \paperauthorc\footnotemark[2]\ \footnotemark[3]}

\maketitle

\renewcommand{\thefootnote}{\fnsymbol{footnote}}
\footnotetext[1]{\paperauthoraaffiliation. (\url{\paperauthoraemail})}
\footnotetext[2]{\paperauthorbaffiliation. (\url{\paperauthorbemail},\url{\paperauthorcemail})}
\footnotetext[3]{Corresponding author.}
\renewcommand{\thefootnote}{\arabic{footnote}}

\begin{abstract}{
Gradient tracking methods have emerged as one of the most popular approaches for solving decentralized optimization problems over networks. In this setting, each node in the network has a portion of the global objective function, and the goal is to collectively optimize this function. At every iteration, gradient tracking methods perform two operations (steps): $(1)$ compute local gradients, and $(2)$ communicate information with local neighbors in the network.
The complexity of these two steps varies across different applications.
In this paper, we present a framework that unifies gradient tracking methods and is endowed with flexibility with respect to the number of communication and computation steps. We establish unified theoretical convergence results for the algorithmic framework with any composition of communication and computation steps, and quantify the improvements achieved as a result of this flexibility. The framework recovers the results of popular gradient tracking methods as special cases, and allows for a direct comparison of these methods. Finally, we illustrate the performance of the proposed methods on quadratic functions and binary classification problems.
}
\end{abstract}



\section{Introduction} \label{sec.intro}

We consider the problem of minimizing a function over a network. In this setting, each node of the network has a portion of the global objective function and the edges represent neighbor nodes that can exchange information, i.e., communicate. The goal is to collectively minimize a finite sum of functions where each component is only known to one of the $n$ nodes (or agents) of the network. Such problems arise in many application areas such as machine learning \cite{forero2010consensus,tsianos2012consensus}, sensor networks \cite{baingana2014proximal, predd2007distributed}, multi-agent coordination \cite{cao2012overview, zhou2011multirobot} and signal processing \cite{combettes2011proximal}. The problem, known as a \emph{decentralized optimization} problem, can be represented as follows:
\begin{align}		\label{eq:prob}
	\min_{x\in \mathbb{R}^d}\quad f(x) = \frac{1}{n} \sum_{i=1}^n f_i(x),
\end{align}
where $f: \mathbb{R}^d \rightarrow \mathbb{R}$ is the global objective function, $f_i: \mathbb{R}^d \rightarrow \mathbb{R}$ for each $i\in \{1,2,...,n \}$ is the local objective function known only to node $i$ and $x\in \mathbb{R}^d$ is the decision variable.

To decouple the computation across different nodes, \eqref{eq:prob} is often reformulated as 
\begin{equation}\label{eq:cons_prob}
\begin{aligned}	
	\min_{x_i \in \mathbb{R}^d}&\quad \frac{1}{n} \sum_{i=1}^n f_i(x_i)\\
    \text{s.t.} &\quad  x_i = x_j, \quad \forall \,\, (i, j) \in \mathcal{E},
\end{aligned}
\end{equation}
where $x_i \in \mathbb{R}^d$ for each node $i\in \{1,2,...,n \}$ is a local copy of the decision variable, and  $\mathcal{E}$ denotes the set of edges of the network; see e.g., \cite{bertsekas2015parallel,nedic2009distributed}. If the underlying network is connected, the \emph{consensus} constraint ensures that all local copies are equal, and, thus, problems \eqref{eq:prob} and \eqref{eq:cons_prob} are equivalent. For compactness, we express problem \eqref{eq:cons_prob} as
\begin{equation}\label{eq:cons_prob1}
\begin{aligned}		
	\min_{x_i \in \mathbb{R}^d}&\quad \textbf{f} (\textbf{x}) = \frac{1}{n} \sum_{i=1}^n f_i(x_i)\\
	\text{s.t.} & \quad (\textbf{W}\otimes I_d)\textbf{x} = \textbf{x}, 
\end{aligned}
\end{equation}
where $\textbf{x} \in \mathbb{R}^{nd}$ is a concatenation of local copies $x_i$, $\textbf{W} \in \mathbb{R}^{n \times n}$ is a matrix that captures the connectivity of the underlying network, $I_d \in \mathbb{R}^{d \times d}$ is the identity matrix of dimension $d$, and the operator $\otimes$ denotes the Kronecker product,  $\textbf{W}\otimes I_d \in \mathbb{R}^{nd \times nd}$. The matrix $\textbf{W}$, known as the \emph{mixing} matrix, is a symmetric, doubly-stochastic matrix with $w_{ii}>0$ and $w_{ij}>0$ ($i\neq j$) if and only if $(i, j) \in \mathcal{E}$ in the underlying network. This matrix ensures that $(\textbf{W}\otimes I_d) \textbf{x}=\textbf{x}$ if and only if $x_i=x_j \,\, \forall \,\, (i, j) \in \mathcal{E}$ in the connected network, thus, 
\eqref{eq:cons_prob} and \eqref{eq:cons_prob1} are equivalent.

In this paper, we focus on gradient tracking methods. These first-order methods update and communicate the local decision variables, and also maintain, update and communicate an additional auxiliary variable that estimates (tracks) the gradient of the global objective function.
We refer to the information shared by the methods as the communication strategy. When applied to the same decentralized setting, the theoretical convergence guarantees and practical implementations of gradient tracking methods with different communication strategies can vary significantly. 
We propose an algorithmic framework that unifies communication strategies in gradient tracking methods and that allows for a direct theoretical and empirical comparison. The framework recovers popular gradient tracking methods as special cases.

The update form of gradient tracking methods can be generalized and decomposed as: $(1)$ one \emph{computation step} of calculating the local gradients, and $(2)$ one \emph{communication step} of sharing information based on the communication strategy. The 
complexity (cost) of these two steps can vary significantly across applications. For example, a large-scale machine learning problem solved on a cluster of computers with shared memory access has a higher cost of computation than communication \cite{tsianos2012consensus}. On the other hand, optimally allocating channels over a wireless sensor network requires economic usage of communications due to limited battery power \cite{magnusson2017bandwidth}.
The subject of developing algorithms (and convergence guarantees) that balance these costs has received significant attention in recent years; see e.g.,~\cite{chen2012fast,berahas2018balancing,9479747,berahas2019nested,sayed2014diffusion,zhang2018communication} and the references therein. In this paper, we follow the approach used in~\cite{berahas2018balancing} and 
explicitly decompose the two steps.
As a result, our algorithms are endowed with flexibility in terms of the number of communication and computation steps performed at each iteration. We show the benefits of this flexibility theoretically and empirically.

\subsection{Literature Review} \label{sec.lit}

Decentralized Gradient Descent (DGD) \cite{bertsekas2015parallel, nedic2009distributed}, a primal first-order method, is considered the prototypical method for solving~\eqref{eq:prob}. 
At each iteration nodes perform local computations and communicate local decision variable to neighbors. 
Gradient tracking methods, e.g., EXTRA \cite{shi2015extra}, SONATA \cite{sun2022distributed}, NEXT \cite{di2016next}, DIGing \cite{nedic2017achieving}, Aug-DGM \cite{xu2015augmented}, have emerged as popular alternatives due to their superior theoretical guarantees and empirical performance. 
They maintain, update and communicate an additional auxiliary variable that tracks the average gradient (additional communication cost compared to DGD). These methods are usually applied to smooth convex functions over undirected networks; however, they are also applicable to various other settings such as time varying networks \cite{nedic2017achieving}, uncoordinated step sizes \cite{nedic2017geometrically, xu2015augmented}, directed networks \cite{nedic2017achieving, pu2020push}, nonconvex functions \cite{di2016next, sun2022distributed}  
and stochastic gradients \cite{pu2021distributed}. Our algorithmic framework generalizes and extends current gradient tracking methodologies, allowing for a unified analysis and direct comparison of popular methods. Notably, our framework differs significantly from existing works that aim to unify gradient tracking methods. In \cite{sundararajan2017robust} and \cite{zhang2019computational}, semi-definite programming is used for this purpose. In \cite{alghunaim2020decentralized} and \cite{xu2021distributed}, the authors introduce unifying frameworks, similar to those proposed in this paper, for comparing different communication strategies. 
However, our framework is simpler and allows for the exact specification of communication and computation steps at each iteration within the network. Furthermore, our proposed framework can accommodate a wider range of communication strategies than those discussed in \cite{sundararajan2017robust,zhang2019computational, alghunaim2020decentralized, xu2021distributed}. As a result of this increased algorithmic flexibility, our framework 
makes it possible to perform comprehensive comparisons among popular gradient tracking methods.


Another class of popular methods is  
primal-dual methods \cite{arjevani2020ideal, jakovetic2014linear, ling2015dlm, shi2014linear, wei20131, mansoori2021flexpd, mancino2021decentralized}. Of these methods, Flex-PD \cite{mansoori2021flexpd} and ADAPD \cite{mancino2021decentralized} allow for flexibility with respect to the number of communication and computation steps. That said, Flex-PD \cite{mansoori2021flexpd} does not show improved performance with the employment of the flexibility and ADAPD \cite{mancino2021decentralized} does not allow for a balance between communication and computation. In \cite{nguyen2022performance}, the authors propose LU-GT, an algorithm that has similarities to our framework in terms of executing multiple local computation steps within gradient tracking methods. Despite the common motivation and similarities, there are several distinct and notable differences. The LU-GT algorithm has two step size hyper-parameters, whereas our approach has only one. 
Furthermore, our analysis results in less pessimistic step size conditions. 
It is worth noting that modifying LU-GT to align with our framework by setting the second step size to one is not possible due to the required conditions imposed in \cite{nguyen2022performance}. 
Moreover, our framework also provides a unifying foundation encompassing all popular gradient tracking methods. 

Finally, algorithms that consider the consensus constraint as a proximal operator have been proposed. These algorithms aim to reduce communication load on distributed systems via a randomization scheme but are primarily designed for fully connected networks (all pairs of nodes are connected). 
Examples of such methods include, but are not limited to, Scaffnew \cite{mishchenko2022proxskip}, FedAvg \cite{li2019convergence}, Scaffold \cite{karimireddy2020scaffold}, Local-SGD \cite{gorbunov2021local} and FedLin \cite{mitra2021linear}.

\subsection{Contributions} \label{sec.contri}
We summarize our main contributions as follows:
\begin{enumerate}
	\item We propose a gradient tracking algorithmic framework (\texttt{GTA}) that unifies communication strategies in gradient tracking methods and provides flexibility in the number of communication and computation steps performed at each iteration. The framework recovers as special cases popular gradient tracking methods, i.e., ~\texttt{GTA-1} \cite{shi2015extra, nedic2017achieving}, \texttt{GTA-2} \cite{di2016next, sun2022distributed} and \texttt{GTA-3} \cite{nedic2017geometrically, xu2015augmented}; see \cref{tab: Algorithm Def}.
    \item We establish the conditions required, on the communication strategy and the step size parameter, that 
    guarantee a global linear rate of convergence for \texttt{GTA} with multiple communication and multiple computation steps. 
    We also compare the relative performance of the special case gradient tracking algorithms, and illustrate the theoretical advantages of \texttt{GTA-3} over \texttt{GTA-2} (and \texttt{GTA-2} over \texttt{GTA-1}), a direct comparison not established in prior literature. 
    \item We show that the rate of convergence improves 
    with increasing the number of communication steps, and the extent of improvement depends on the communication strategy. 
    The improvements are much more profound in \texttt{GTA-3} as compared to \texttt{GTA-2} and \texttt{GTA-1}. 
    \item We illustrate the empirical performance of the proposed \texttt{GTA} framework on quadratic and binary classification logistic regression problems. We show the effect and benefits of 
    multiple communication and/or computation steps per iteration on the performance of the 
    algorithms.
\end{enumerate}

\subsection{Notation} \label{sec.notation}
Our proposed algorithmic framework is iterative and works with inner and outer loops. The variables $x_{i, k, j} \in \mathbb{R}^d$ and $y_{i, k, j} \in \mathbb{R}^d$ denote the local copies 
of the decision variable and the auxiliary variable, respectively, of node $i$, in outer iteration $k$ and inner iteration $j$. The averages of all local decision variables and local auxiliary variables are denoted by $\bar{x}_{k, j} = \frac{1}{n} \sum_{i=1}^n x_{i, k, j}$ and $\bar{y}_{k, j} = \frac{1}{n} \sum_{i=1}^n y_{i, k, j}$, respectively. Boldface lowercase letters represent concatenated vectors of local copies
\begin{align*}
    \xmbf_{k, j} = 
    \begin{bmatrix}
        x_{1, k, j}\\
        x_{2, k, j}\\
        \vdots \\
        x_{n, k, j}
    \end{bmatrix} \in \mathbb{R}^{nd}\mbox{,} \quad
    \ymbf_{k, j} = 
    \begin{bmatrix}
        y_{1, k, j}\\
        y_{2, k, j}\\
        \vdots \\
        y_{n, k, j}
    \end{bmatrix} \in \mathbb{R}^{nd}
    \mbox{,} \quad
      \nabla \fmbf(\xmbf_{k, j}) = 
    \begin{bmatrix}
        \nabla f_1(x_{1, k, j})\\
        \nabla f_2(x_{2, k, j})\\
        \vdots \\
        \nabla f_n(x_{n, k, j})
    \end{bmatrix} \in \mathbb{R}^{nd}.
\end{align*}
The concatenated vector of the average of decision variables ($\Bar{x}_{k, j}$) and auxiliary variables ($\Bar{y}_{k, j}$) repeated $n$ times is denoted by $\xbb_{k, j}$ and $\ybb_{k, j}$, respectively. 
The $n$ dimensional vector of all ones is denoted by $1_n$ and the identity matrix of dimension $n$ is denoted by $I_n$. The spectral radius of square matrix $A$ is $\rho(A)$. Matrix inequalities are defined component wise. 
The Kronecker product of any two matrices $A \in \mathbb{R}^{n \times n}$ and $B \in \mathbb{R}^{d \times d}$ is represented using the operator $\otimes$ and denoted as $A \otimes B \in \mathbb{R}^{nd \times nd}$.

\subsection{Paper Organization} In \cref{sec.methods}, we describe our proposed gradient tracking algorithmic framework (\texttt{GTA}). In \cref{sec.theory}, we provide theoretical convergence guarantees for the proposed algorithmic framework for multiple communication steps and a single computation step at each iteration (\cref{sec.mult comms}) and multiple communication and computation steps at each iteration (\cref{sec.mult grads}). In \cref{sec.full graph res}, we consider the special case 
of fully connected networks. Numerical experiments on quadratic and binary classification logistic regression problems 
are presented in \cref{sec.num_exp}. Finally, we provide concluding remarks in \cref{sec.conc}.

\section{Gradient Tracking Algorithmic Framework}\label{sec.methods}
In this section, we describe our algorithmic framework (\texttt{GTA}) that unifies gradient tracking methods. We then extend the framework to allow for flexibility in the number of communication and  computation steps performed at every iteration. Finally, we make remarks about the algorithmic framework and implementation, and then discuss popular gradient tracking methods as special cases of our proposed framework.

The 
iterate update form (for all $k\geq0$) for the decision variable $\xmbf \in \mathbb{R}^{nd}$ and the auxiliary variable $\ymbf \in \mathbb{R}^{nd}$ that we propose to unify gradient tracking methods is  
\begin{equation}\label{eq: general_form}
\begin{aligned}
    \xmbf_{k+1, 1} & = \Zmbf_1 \xmbf_{k, 1} - \alpha \Zmbf_2 \ymbf_{k,1} \\ 
    \ymbf_{k+1, 1} & = \Zmbf_3 \ymbf_{k, 1} + \Zmbf_4 (\nabla \fmbf(\xmbf_{k+1, 1}) - \nabla \fmbf(\xmbf_{k, 1})), 
\end{aligned}
\end{equation}
where $\alpha>0$ is the constant step size, $\Zmbf_i = \Wmbf_i \otimes I_d \in \mathbb{R}^{nd \times nd}$ for $i = 1, 2, 3, 4$ and $\Wmbf_i \in \mathbb{R}^{n \times n}$ are communication matrices. A communication matrix $\Umbf \in \mathbb{R}^{n \times n}$ is a symmetric, doubly stochastic matrix that respects the connectivity of the network, i.e., $u_{ii} > 0$ and $u_{ij}  \geq  0$ ($i \neq j$) if $(i,j) \in \mathcal{E}$ and $u_{ij} = 0$ ($i \neq j$) if $(i,j) \notin \mathcal{E}$. 
The communication matrices, $\Wmbf_i$ for $i = 1, 2, 3, 4$, represent four (possibly different) network topologies consisting of all the nodes and (possibly different) subsets of the edges of the network over which the corresponding vectors are communicated.
The update form given in \eqref{eq: general_form} generalizes many popular gradient tracking methods for different choices of the communication matrices; see \cref{tab: Algorithm Def}. 
While the  methodology has 
similarities to \cite{xu2021distributed, alghunaim2020decentralized}, our framework allows for the exact specification of the communication quantities within the network and does not 
impose any interdependent conditions among the communication matrices $\Wmbf_i$ for $i = 1, 2, 3, 4$.
In \eqref{eq: general_form} one communication and one computation step is performed at every iteration and so the inner iteration index is always $1$. 
We include this subscript for consistency with the presentation of the algorithm and analysis with multiple communication and computation steps.

\begin{table}[H]\centering
\caption{Special cases of Gradient Tracking Algorithm (\texttt{GTA}).  
}\label{tab: Algorithm Def}
\begin{tabular}{l*{4}{>{\centering\arraybackslash}p{0.8cm}}c}\toprule
\multirow{2}{*}{Method} &\multicolumn{4}{c}{Communication Matrices} & Algorithms in literature\\\cmidrule{2-5}
&$\Wmbf_1$&$\Wmbf_2$&$\Wmbf_3$&$\Wmbf_4$& $(n_c = n_g = 1)$\\\midrule
\texttt{GTA-1} &$\Wmbf$ &$I_n$ &$\Wmbf$ &$I_n$& DIGing \cite{nedic2017achieving}, EXTRA  
\cite{shi2015extra},  \\\hdashline
\texttt{GTA-2} &$\Wmbf$ &$\Wmbf$ &$\Wmbf$ &$I_n$ & SONATA \cite{sun2022distributed}, NEXT \cite{di2016next,pu2020push} \\\hdashline
\texttt{GTA-3} &$\Wmbf$ &$\Wmbf$ &$\Wmbf$ &$\Wmbf$ & Aug-DGM \cite{xu2015augmented}, ATC-DIGing \cite{nedic2017geometrically}\\ 
\bottomrule
\end{tabular}

Note: $\Wmbf$ is a mixing matrix.
\end{table}

We incorporate multiple communications in \eqref{eq: general_form} by replacing $\Zmbf_i$ with $\Zmbf_i^{n_c} = \Wmbf_i^{n_c} \otimes I_d$ for $i=1, 2, 3, 4$, where $n_c \geq 1$ is the number of communication steps at each iteration. 
Taking the communication matrices to the $n_c$ power represents performing $n_c$ communication (consensus) steps at every iteration. We further extend \eqref{eq: general_form} to incorporate multiple computation steps at each iteration. That is, the algorithm performs multiple local updates before communicating information with local neighbors. Our full algorithmic framework with flexibility in the number of communication and computation steps, i.e., $n_c \geq 1$ and $n_g \geq 1$, is given in \cref{alg : Deterministic}. A balance between the number of communication and computation steps is required to achieve overall efficiency for different applications, and \texttt{GTA} allows for such flexibility in these steps via the parameters $n_g$ and $n_c$.

\begin{algorithm}[H]
    \caption{\texttt{GTA}: Gradient Tracking Algorithm}
    \textbf{Inputs:} initial point $\xmbf_{0, 1} \in \R{nd}$, step size $\alpha >0$, computations $n_g \geq 1$, 
    
    communications $n_c \geq 1$.
    \begin{algorithmic}[1]
        \State $\textbf{y}_{0, 1} \gets \nabla \textbf{f}(\textbf{x}_{0, 1})$
        \For{$k \gets 0, 1, 2$ ... }    
            \If{$n_g > 1$}
                \For{$j \gets 1, 2$ ... $, n_g-1$}
                    \State $\textbf{x}_{k, j+1} \gets \textbf{x}_{k, j} - \alpha \,\textbf{y}_{k, j}$
                    \State $\textbf{y}_{k, j+1} \gets \textbf{y}_{k, j} + \nabla \textbf{f}(\textbf{x}_{k, j+1})  - \nabla \textbf{f}(\textbf{x}_{k, j})$
                \EndFor
            \EndIf
            
            \State $\textbf{x}_{k+1, 1} \gets \textbf{Z}_1^{n_c} \textbf{x}_{k, n_g} - \alpha \, \textbf{Z}_2^{n_c} \textbf{y}_{k, n_g}$
            \State $\textbf{y}_{k+1, 1} \gets \textbf{Z}_3^{n_c} \textbf{y}_{k, n_g} + \textbf{Z}_4^{n_c}(\nabla \textbf{f}(\textbf{x}_{k+1, 1})  - \nabla \textbf{f}(\textbf{x}_{k, n_g}))$
        \EndFor
    \end{algorithmic}
    \label{alg : Deterministic}
\end{algorithm}
\bremark 
We make the following remarks about \cref{alg : Deterministic}. 
\begin{itemize}
    \item \textbf{Communications and Computations:} The number of communication and computation steps are dictated by $n_c$ and $n_g$, respectively. 
    By performing multiple communication steps, the goal is to improve consensus across the local decision variables. By performing multiple computation steps, the goal is for individual nodes to make more progress on their local objective functions. 
    \item \textbf{Inner and Outer Loops:} Lines 2--8 form the outer loop and Lines 4--6 form the inner loop. The algorithm performs $n_c$ communication steps each outer iteration (Lines 7 and 8). The algorithm performs $n_g$ local (gradient) computations at each outer iteration; $n_g-1$ computations in the inner loop (Line 6, $\nabla \fmbf(\xmbf_{k, j+1})$) and one computation in the outer loop (Line 8, $\nabla \fmbf(\xmbf_{k+1, 1})$). 
    The inner loop is only executed if more than one computation, i.e., $n_g>1$,  is to be performed every outer iteration (Line 3). By default, we refer to outer iterations when we say iterations unless otherwise specified.
    \item \textbf{Step size ($\alpha>0$):} The algorithm employs a constant step size that depends on the problem parameters, the choices of $n_c$ and $n_g$, and the communication strategy, i.e., $\Wmbf_i$ for $i = 1, 2, 3, 4$. 
\end{itemize} 
\eremark

We analyze \texttt{GTA} and provide results for several popular communication strategies as special cases; summarized in \cref{tab: Algorithm Def}. The choice of the communication matrices ($\Wmbf_i$ for $i = 1, 2, 3, 4$), or equivalently the communication strategy, impact both the convergence of the algorithm and practical implementation. Notice that all methods in \cref{tab: Algorithm Def} require that $\Wmbf_1$ and $\Wmbf_3$ are mixing matrices. Our theoretical results recover this for the general framework. Consider \texttt{GTA-1}, \texttt{GTA-2} and \texttt{GTA-3} defined in \cref{tab: Algorithm Def} with $n_g=1$. In \texttt{GTA-1} and \texttt{GTA-2}, computing local gradients and communications can be performed in parallel because the local gradients need not be communicated ($\Wmbf_4 = I_n$). On the other hand, in \texttt{GTA-3}, these steps need to be performed sequentially. Such trade-offs can create significant impact depending on the problem setting and system.

As mentioned above, the communication matrices ($\Wmbf_i$ for $i = 1, 2, 3, 4$) in \texttt{GTA} need not be the same. That is,  different information can be exchanged on subsets of the edges of the network. This allows for a flexibility in the communication strategy that current gradient tracking methodologies do not possess. 
Such strategies can be useful in applying gradient tracking methods to decentralized settings with networks with bandwidth limitations, e.g., optimization problems in cyberphysical systems with battery powered wireless sensors \cite{magnusson2017bandwidth}.


\section{Convergence Analysis}\label{sec.theory}

In this section, we present 
theoretical convergence guarantees for our proposed algorithmic framework (\texttt{GTA}). 
The analysis is divided into three subsections. 
In \cref{sec.mult comms}, we analyze the effect of multiple communications, i.e., $n_c \geq 1$ (and $n_g = 1$), on \texttt{GTA} and the three special cases \texttt{GTA-1}, \texttt{GTA-2} and \texttt{GTA-3}. 
While these results are a special case of the results presented in \cref{sec.mult grads}, we present these results first as they are simpler to derive, easier to follow and allow us to gain intuition about the effect of the number of communications. 
We then look at the effect of multiple computations in conjunction with multiple communications, i.e., $n_c \geq 1$ and $n_g \geq 1$, in \cref{sec.mult grads} by extending the analysis from \cref{sec.mult comms}. In \cref{sec.full graph res}, we analyze \texttt{GTA-2} and \texttt{GTA-3} for fully connected networks; this special case is not captured by the analysis in the previous subsections. 

We make the following assumption on the functions. 

\bassumption    \label{asum.convex and smooth}
    The global objective function $f: \mathbb{R}^d \rightarrow \mathbb{R}$ is $\mu$-strongly convex. Each component function $f_i: \mathbb{R}^d \rightarrow \mathbb{R}$ $($for $i \in \{ 1,2,\dots,n\}$$)$ has L-Lipschitz continuous gradients. That is, for all $z, z' \in \mathbb{R}^d$
    \begin{align*}
        &f(z')  \geq f(z) +  \langle \nabla f(z), z' - z \rangle + \tfrac{\mu}{2} \|z' - z\|_2^2, \\
        &\|\nabla f_i(z) - \nabla f_i(z')\|_2 \leq L\|z - z'\|_2,    \qquad \qquad \quad \forall \; i = 1, \dots, n.
    \end{align*}
\eassumption
By \cref{asum.convex and smooth}, the global minimizer of~\eqref{eq:prob} is unique, and we denote it by $x^*$.

For notational convenience, we define
\begin{align*}
   \beta^{n_c} = \left\|\Wmbf^{n_c} - \tfrac{1_n1_n^T}{n}\right\|_2, \quad \beta_i^{n_c} = \left\|\Wmbf_i^{n_c} - \tfrac{1_n1_n^T}{n}\right\|_2, \qquad \forall \; i = 1, 2, 3, 4, 
\end{align*}
where $\beta \in [0, 1)$ because $\Wmbf$ is a mixing matrix for a connected network and $\beta_i \in [0,1]$ because $\Wmbf_i$ for $ i = 1, 2, 3, 4$ are symmetric, doubly stochastic matrices. Using the definitions of $\Zmbf^{n_c} = \Wmbf^{n_c}\otimes I_{d}$ and $\Zmbf_i^{n_c} = \Wmbf_i^{n_c}\otimes I_{d}$ for $ i = 1, 2, 3, 4$, it follows that
\begin{align}   \label{eq : beta and Z}
    \|\Zmbf^{n_c} - \Imbf\|_2 = \beta^{n_c}, \quad \|\Zmbf_i^{n_c} - \Imbf\|_2 = \beta_i^{n_c},  \qquad \forall \,\, i = 1, 2, 3, 4.
\end{align}
We also define,
\begin{align}\label{eq : derivative terms define}
   h_{k, j} = \frac{1}{n} \sum_{i = 1}^n \nabla f_i(x_{i, k, j}), \quad \hbar_{k, j} = \frac{1}{n} \sum_{i = 1}^n \nabla f_i(\xbar_{k, j}), \quad \mbox{and} \quad \Imbf = \frac{1_n1_n^T}{n} \otimes I_d .
\end{align}
where $x_{i,k,j}$, denotes the local copy of the $i$th node, at outer iteration $k$ and inner iteration $j$. 
In the analysis, for all $k\geq 0$, we consider the following error vector
\begin{align*} 
    r_k = \begin{bmatrix}
        \|\xbar_{k,1} - x^*\|_2\\
        \|\xmbf_{k,1} - \Bar{\xmbf}_{k,1}\|_2\\
        \|\ymbf_{k,1} - \Bar{\ymbf}_{k,1}\|_2\\
    \end{bmatrix}. 
\end{align*}
The error vector $r_k$ combines the optimization error, $\|\xbar_{k,1} - x^*\|_2$, and consensus errors, $\|\xmbf_{k,1} - \Bar{\xmbf}_{k,1}\|_2$ and $\|\ymbf_{k,1} - \Bar{\ymbf}_{k,1}\|_2$ where $\xmbf_{k, 1}$ and $\ymbf_{k, 1}$ are the first iterates of outer iteration $k$. We establish general technical lemmas that quantify the relation between $r_{k+1}$ and $r_k$ for each case of the presented algorithm.  


\subsection{\texttt{GTA} with multiple communication \texorpdfstring{($n_c \geq 1, n_g=1$)}{Lg}} \label{sec.mult comms} In this section, we analyze \texttt{GTA} when only one computation step is performed per iteration. 
In this setting ($n_g=1$), the inner loop (Lines 4--6 in \cref{alg : Deterministic}) is never executed. Thus, the inner iteration counter in \texttt{GTA} can be ignored and the iteration simplifies to 
\begin{equation}   \label{eq : g=1 general form}
\begin{aligned}
    \xmbf_{k+1} & = \Zmbf_1^{n_c} \xmbf_k - \alpha \Zmbf_2^{n_c} \ymbf_k,   \\ 
    \ymbf_{k+1} & = \Zmbf_3^{n_c} \ymbf_k + \Zmbf_4^{n_c} (\nabla \fmbf(\xmbf_{k+1}) - \nabla \fmbf(\xmbf_{k})).
\end{aligned}
\end{equation}
We note that throughout this subsection we drop the subscript related to the inner iteration $j$, i.e., $x_{i,k,j}$ is denoted as $x_{i,k}$ (since $j=1$), and similar with other quantities. 
We first establish the progression of the error vector $r_k$ as a linear system for \eqref{eq : g=1 general form}. Then, we provide the step size conditions and convergence rates for \eqref{eq : g=1 general form} and the instances in \cref{tab: Algorithm Def} when $n_g = 1$.

\begin{lemma}\label{lem:lyapunov g = 1}
    Suppose \cref{asum.convex and smooth} holds and the number of gradient steps in each outer iteration of \cref{alg : Deterministic} is set to one (i.e., $n_g=1$). If $\alpha \leq \tfrac{1}{L}$, then for all $k\geq 0$,
    \begin{align*}
        r_{k+1} \leq A(n_c) r_k, 
    \end{align*}
    \begin{align} \label{eq : g = 1 general A}
    \mbox{where} \quad A(n_c) = \begin{bmatrix}
        1 - \alpha \mu & \tfrac{\alpha L}{\sqrt{n}} & 0\\
        0 & \beta_1^{n_c} & \alpha\beta_2^{n_c}\\
        \sqrt{n}\alpha \beta_4^{n_c} L^2 & \beta_4^{n_c}L(\|\Zmbf_1^{n_c}-I_{nd}\|_2 + \alpha L) & \beta_3^{n_c} + \alpha \beta_4^{n_c} L\\
        \end{bmatrix}.
    \end{align}
\end{lemma}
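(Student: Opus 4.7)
The plan is to prove the three scalar inequalities corresponding to the three rows of $A(n_c)$ separately, one per component of $r_k$. Two preliminary facts will be used repeatedly. First, the average satisfies $\bar{x}_{k+1} = \bar{x}_k - \alpha \bar{y}_k$, obtained by left-multiplying \eqref{eq : g=1 general form} by $\tfrac{1_n 1_n^T}{n} \otimes I_d$ and using that $\mathbf{W}_1, \mathbf{W}_2$ are doubly stochastic. Second, the gradient tracking identity $\bar{y}_k = h_k$ (where $h_k$ is defined in \eqref{eq : derivative terms define}) follows by a straightforward induction on $k$ from the initialization $\mathbf{y}_{0} = \nabla \mathbf{f}(\mathbf{x}_0)$ and the doubly stochastic property of $\mathbf{W}_3, \mathbf{W}_4$.

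For the first row (optimization error), I would write $\bar{x}_{k+1} - x^* = \bigl(\bar{x}_k - \alpha \nabla f(\bar{x}_k) - x^*\bigr) - \alpha(h_k - \hbar_k)$, noting that $\hbar_k = \nabla f(\bar{x}_k)$. The first piece is a standard gradient step on the $\mu$-strongly convex, $L$-smooth function $f$; for $\alpha \le 1/L$ this contracts by $(1-\alpha\mu)$. The second piece is controlled by per-node Lipschitz continuity and Cauchy--Schwarz, giving $\|h_k - \hbar_k\|_2 \le \tfrac{L}{\sqrt{n}}\|\mathbf{x}_k - \bar{\mathbf{x}}_k\|_2$. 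For the second row (consensus on $\mathbf{x}$), I would project the $x$-update onto the orthogonal complement of the consensus subspace. Using $\mathbf{Z}_i^{n_c} \mathcal{I} = \mathcal{I} = \mathcal{I}\mathbf{Z}_i^{n_c}$, one obtains $(I-\mathcal{I})\mathbf{Z}_i^{n_c} = \mathbf{Z}_i^{n_c} - \mathcal{I}$ and $(\mathbf{Z}_i^{n_c}-\mathcal{I})\bar{\mathbf{v}}=0$ for any $\bar{\mathbf{v}}$ in the consensus subspace, so
\begin{equation*}
\mathbf{x}_{k+1} - \bar{\mathbf{x}}_{k+1} = (\mathbf{Z}_1^{n_c} - \mathcal{I})(\mathbf{x}_k - \bar{\mathbf{x}}_k) - \alpha (\mathbf{Z}_2^{n_c}-\mathcal{I})(\mathbf{y}_k - \bar{\mathbf{y}}_k).
\end{equation*}
Taking norms and applying \eqref{eq : beta and Z} produces the second row exactly.

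The third row is the technical core. Applying the same projection identity to the $y$-update gives
\begin{equation*}
\mathbf{y}_{k+1} - \bar{\mathbf{y}}_{k+1} = (\mathbf{Z}_3^{n_c} - \mathcal{I})(\mathbf{y}_k - \bar{\mathbf{y}}_k) + (\mathbf{Z}_4^{n_c}-\mathcal{I})\bigl(\nabla \mathbf{f}(\mathbf{x}_{k+1}) - \nabla \mathbf{f}(\mathbf{x}_k)\bigr),
\end{equation*}
which after norms, \eqref{eq : beta and Z}, and $L$-smoothness reduces the problem to bounding $\|\mathbf{x}_{k+1} - \mathbf{x}_k\|_2$ in terms of the three components of $r_k$. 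Using the $x$-update together with $(\mathbf{Z}_1^{n_c}-I)\bar{\mathbf{x}}_k = 0$ yields $\mathbf{x}_{k+1} - \mathbf{x}_k = (\mathbf{Z}_1^{n_c}-I)(\mathbf{x}_k-\bar{\mathbf{x}}_k) - \alpha \mathbf{Z}_2^{n_c}\mathbf{y}_k$. For the second term I would split $\mathbf{y}_k = (\mathbf{y}_k - \bar{\mathbf{y}}_k) + \bar{\mathbf{y}}_k$, use $\|\mathbf{Z}_2^{n_c}\|_2 \le 1$ (since $\mathbf{W}_2$ is symmetric doubly stochastic), and then invoke gradient tracking: $\|\bar{\mathbf{y}}_k\|_2 = \sqrt{n}\|h_k\|_2$, with $h_k = (h_k - \hbar_k) + \nabla f(\bar{x}_k)$. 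A further application of the Lipschitz bound used in row one, together with $\nabla f(x^*) = 0$ and $L$-smoothness of $f$, produces $\|\bar{\mathbf{y}}_k\|_2 \le L\|\mathbf{x}_k - \bar{\mathbf{x}}_k\|_2 + \sqrt{n}L\|\bar{x}_k - x^*\|_2$. Plugging everything back and multiplying by $\beta_4^{n_c} L$ matches the three entries of the third row exactly.

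The main obstacle is clearly the third row: the coefficient $\sqrt{n}\alpha\beta_4^{n_c} L^2$ on $\|\bar{x}_k - x^*\|_2$ does not appear directly from the $y$-update, and is only produced after chaining the auxiliary variable to $\mathbf{x}_{k+1}-\mathbf{x}_k$, then to $\|\bar{\mathbf{y}}_k\|_2$, and finally to the optimization error through the gradient tracking identity and smoothness of $f$. The remainder consists of projection arguments onto the consensus complement plus the standard contraction of a gradient descent step on a strongly convex objective, both of which are routine once the identity $(I-\mathcal{I})\mathbf{Z}_i^{n_c} = \mathbf{Z}_i^{n_c}-\mathcal{I}$ is in hand.
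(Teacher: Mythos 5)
Your proposal is correct and follows essentially the same route as the paper's proof: the same telescoping identity $\bar{y}_k = h_k$, the same gradient-descent contraction plus Lipschitz bound for the first row, the same projection onto the consensus complement for the second row, and the same chain $\|\ymbf_{k+1}-\bar{\ymbf}_{k+1}\|_2 \to \|\xmbf_{k+1}-\xmbf_k\|_2 \to \|\bar{\ymbf}_k\|_2 \to \|\bar{x}_k - x^*\|_2$ for the third row. No substantive differences.
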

\begin{proof}
If $n_g=1$, using \eqref{eq : g=1 general form}, the average iterates can be expressed as
\begin{align*}
    \Bar{x}_{k+1} & = \Bar{x}_k - \alpha \Bar{y}_k,   \\
    \Bar{y}_{k+1} & = \Bar{y}_k + h_{k+1} - h_{k},
\end{align*} 
where $h_k$ is defined in~\eqref{eq : derivative terms define}.
Taking the telescopic sum of $\Bar{y}_{i+1}$ from $i=0$ to $k-1$ with $\bar{y}_0 = h_0$, it follows that
\begin{align*}
    \Bar{y}_{k} & = h_k. \numberthis \label{eq:y_bar_telescope}
\end{align*}

We first consider the optimization error on the average iterates. That is, 
\begin{align*}
    \|\Bar{x}_{k+1} - x^*\|_2 & = \left\|\Bar{x}_k - \alpha \Bar{y}_k + \alpha \hbar_k - \alpha\hbar_k - x^*\right\|_2  \\
    & \leq \left\|\Bar{x}_k- \alpha\hbar_k - x^*\right\|_2 + \alpha \left\|\Bar{y}_k - \hbar_k\right\|_2  \\
    &\leq  (1-\alpha \mu) \|\Bar{x}_k - x^*\|_2 + \alpha \left\|h_k - \hbar_k\right\|_2 \\
    &= (1-\alpha \mu) \|\Bar{x}_k - x^*\|_2 + \tfrac{\alpha}{n}\left\|\sum_{i=1}^n \nabla f_i(x_{i, k}) - \nabla f_i(\Bar{x}_{k})\right\|_2 \\
    & \leq (1-\alpha \mu) \|\Bar{x}_k - x^*\|_2 + \tfrac{\alpha L}{n} \sum_{i=1}^n  \| x_{i, k} - \Bar{x}_{k}\|_2    \\
    & \leq (1-\alpha \mu) \|\Bar{x}_k - x^*\|_2 + \tfrac{\alpha L}{\sqrt{n}}  \| \xmbf_{k} - \Bar{\xmbf}_{k}\|_2    \numberthis \label{eq : g = 1 opt bound}
\end{align*}
where the first inequality is due to the triangle inequality, the second inequality is obtained by performing one gradient descent iteration on function $f$ under \cref{asum.convex and smooth} at the average iterate $\bar{x}_k$ with $\alpha \leq \tfrac{1}{L}$ \cite[Theorem 2.1.14]{nesterov1998introductory} and substituting using \eqref{eq:y_bar_telescope},  the equality is due to \eqref{eq : derivative terms define}, the second to last inequality follows by \cref{asum.convex and smooth}, and the last inequality is due to  $\sum_{i=1}^n \|x_{i, k} - \xbar_k\|_2 \leq \sqrt{n}\|\xmbf_{k} - \Bar{\xmbf}_{k}\|_2$.

Next, we consider the consensus error in $\xmbf_k$,
\begin{align*}
    \xmbf_{k+1} - \Bar{\xmbf}_{k+1} &= \Zmbf_1^{n_c}\xmbf_k - \Bar{\xmbf}_{k} - \alpha \Zmbf_2^{n_c}\ymbf_k + \alpha \Bar{\ymbf}_k \\
    & = \Zmbf_1^{n_c}\xmbf_k - \Zmbf_1^{n_c}\Bar{\xmbf}_{k} - \alpha \Zmbf_2^{n_c}\ymbf_k + \alpha \Zmbf_2^{n_c}\Bar{\ymbf}_k  - \Imbf (\xmbf_k - \Bar{\xmbf}_k) + \Imbf (\ymbf_k - \Bar{\ymbf}_k)\\
    & = \left(\Zmbf_1^{n_c} - \Imbf\right)(\xmbf_k - \Bar{\xmbf}_k) - \alpha\left(\Zmbf_2^{n_c} - \Imbf\right)(\ymbf_k - \Bar{\ymbf}_k). 
\end{align*}
where the second equality follows from adding $- \Imbf (\xmbf_k - \Bar{\xmbf}_k) = 0$ and $\Imbf (\ymbf_k - \Bar{\ymbf}_k) = 0$. By the triangle inequality and~\eqref{eq : beta and Z}, 
\begin{equation}\label{eq : g = 1 x con error}
\begin{aligned}
    \|\xmbf_{k+1} - \Bar{\xmbf}_{k+1}\|_2 & \leq \left\|\Zmbf_1^{n_c} - \Imbf\right\|_2 \|\xmbf_k - \Bar{\xmbf}_k\|_2 + \alpha \left\|\Zmbf_2^{n_c} - \Imbf\right\|_2 \|\ymbf_k - \Bar{\ymbf}_k\|_2   \\ 
    & = \beta_1^{n_c} \|\xmbf_k - \Bar{\xmbf}_k\|_2 + \alpha \beta_2^{n_c} \|\ymbf_k - \Bar{\ymbf}_k\|_2.  
\end{aligned}
\end{equation}

Finally, we consider the consensus error in $\ymbf_k$. By the triangle inequality and~\eqref{eq : beta and Z},
\begin{equation}\label{eq: y_bar_bnd_twoterm}
\begin{aligned}
    &\left\|\ymbf_{k+1} - \Bar{\ymbf}_{k+1}\right\|_2 \\
    =& \left\|\Zmbf_3^{n_c}\ymbf_k - \Bar{\ymbf}_{k} + \Zmbf_4^{n_c} (\nabla \fmbf(\xmbf_{k+1}) - \nabla \fmbf(\xmbf_{k})) - \Imbf (\nabla \fmbf(\xmbf_{k+1}) - \nabla \fmbf(\xmbf_{k}))\right\|_2 \\
    \leq& \left\|\left(\Zmbf_3^{n_c} - \Imbf\right)(\ymbf_k - \Bar{\ymbf}_k)\right\|_2 + \left\|\left(\Zmbf_4^{n_c} - \Imbf\right)(\nabla \fmbf(\xmbf_{k+1}) - \nabla \fmbf(\xmbf_{k}))\right\|_2 \\
    \leq& \beta_3^{n_c}\|\ymbf_k - \Bar{\ymbf}_k\|_2 + \beta_4^{n_c} \left\|\nabla \fmbf(\xmbf_{k+1}) - \nabla \fmbf(\xmbf_{k}) \right\|_2.
\end{aligned}
\end{equation}
The last term in \eqref{eq: y_bar_bnd_twoterm} can be bounded as follows,
\begin{align*}
\left\|\nabla \fmbf(\xmbf_{k+1}) - \nabla \fmbf(\xmbf_{k}) \right\|_2 
&\leq L\|\xmbf_{k+1} - \xmbf_{k}\|_2 \\
&= L\|\Zmbf_1^{n_c}\xmbf_{k} - \alpha \Zmbf_2^{n_c}\ymbf_k - \xmbf_{k}\|_2   \\
&= L\|(\Zmbf_1^{n_c}-I_{nd})(\xmbf_{k} - \Bar{\xmbf}_k) - \alpha \Zmbf_2^{n_c}\ymbf_k\|_2 \\
&\leq L\|\Zmbf_1^{n_c}-I_{nd}\|_2\|\xmbf_{k} - \Bar{\xmbf}_k\|_2 + \alpha L\|\Zmbf_2^{n_c}\|_2\|\ymbf_k + \Bar{\ymbf}_k - \Bar{\ymbf}_k\|_2   \\
&\leq L\|\Zmbf_1^{n_c}-I_{nd}\|_2\|\xmbf_{k} - \Bar{\xmbf}_k\|_2 + \alpha L\|\ymbf_k - \Bar{\ymbf}_k\|_2 + \alpha L\left\|\Bar{\ymbf}_k\right\|_2,\numberthis \label{eq: y_bar_bnd_twoterm33}
\end{align*}
where the first inequality is due to \cref{asum.convex and smooth}, the first equality is due to iterate update form \eqref{eq : g=1 general form}, the second equality is by adding $-(\Zmbf_1^{n_c} - I_{nd})\xbb_{k} = 0$ and the last two inequalities are applications of the triangle inequality.
Next we bound the term $\left\|\Bar{\ymbf}_k\right\|_2$. By \eqref{eq:y_bar_telescope}, \cref{asum.convex and smooth} and $\sum_{i=1}^n \|x_{i, k} - \xbar_k\|_2 \leq \sqrt{n}\|\xmbf_{k} - \Bar{\xmbf}_{k}\|_2$, 
\begin{align*}
\left\|\Bar{\ymbf}_k\right\|_2 & \leq \sqrt{n} \|\bar{y}_k\|_2 \\
&= \sqrt{n} \|h_k\|_2 \\
&\leq \sqrt{n}\left\|\tfrac{1}{n} \sum_{i = 1}^n \nabla f_i(x_{i, k}) -  \tfrac{1}{n} \sum_{i = 1}^n \nabla f_i(\bar{x}_{k})\right\|_2 + \sqrt{n}\left\|\tfrac{1}{n} \sum_{i = 1}^n \nabla f_i(\bar{x}_{k})\right\|_2  \\
&= \tfrac{1}{\sqrt{n}}\left\| \sum_{i = 1}^n \nabla f_i(x_{i, k}) -  \sum_{i = 1}^n \nabla f_i(\bar{x}_{k})\right\|_2 + \tfrac{1}{\sqrt{n}} \left\| \sum_{i = 1}^n \nabla f_i(\bar{x}_{k}) - \sum_{i = 1}^n \nabla f_i(x^*)\right\|_2 \\
&\leq L\left\|\xmbf_k - \bar{\xmbf}_k\right\|_2 + \sqrt{n} L \| \Bar{x}_{k} - x^*\|_2. \numberthis \label{eq : y_bar_bound}
\end{align*}
Thus, by \eqref{eq: y_bar_bnd_twoterm}, \eqref{eq: y_bar_bnd_twoterm33} and \eqref{eq : y_bar_bound}, it follows that
\begin{equation}\label{eq.y_result}
\begin{aligned}
    \left\|\ymbf_{k+1} - \Bar{\ymbf}_{k+1}\right\|_2 
    &\leq \beta_4^{n_c}\sqrt{n}\alpha L^2 \| \Bar{x}_{k} - x^*\|_2 + \beta_4^{n_c}L\left( \|\Zmbf_1^{n_c}-I_{nd}\|_2 + \alpha L\right) \|\xmbf_{k} - \Bar{\xmbf}_k\|_2\\
    & \qquad +  \left( \beta_3^{n_c} + \beta_4^{n_c}\alpha L\right) \|\ymbf_k - \Bar{\ymbf}_k\|_2.
\end{aligned}
\end{equation}
Combining \eqref{eq : g = 1 opt bound}, \eqref{eq : g = 1 x con error} and \eqref{eq.y_result} concludes the proof.
\end{proof}
\qed

Using \cref{lem:lyapunov g = 1}, we now provide the explicit form for $A(n_c)$ in order to establish the progression of the error vector $r_k$ for the special cases defined in \cref{tab: Algorithm Def}.

\begin{corollary} \label{col. A special cases}
Suppose the conditions of \cref{lem:lyapunov g = 1} are satisfied. Then, the matrices $A(n_c)$ for the methods described in \cref{tab: Algorithm Def} are defined as:
\begin{align*}
    \mbox{\texttt{GTA-1}:} & \quad A_1({n_c}) = \begin{bmatrix}
            1 - \alpha \mu & \tfrac{\alpha L}{\sqrt{n}} & 0\\
            0 & \beta^{n_c} & \alpha\\
            \sqrt{n}\alpha L^2 & L(2 + \alpha L) & \beta^{n_c} + \alpha L\\
        \end{bmatrix},\\
    \mbox{\texttt{GTA-2}:} & \quad A_2({n_c})  = \begin{bmatrix}
            1 - \alpha \mu & \tfrac{\alpha L}{\sqrt{n}} & 0\\
            0 & \beta^{n_c} & \alpha \beta^{n_c}\\
            \sqrt{n}\alpha L^2 & L(2 + \alpha L) & \beta^{n_c} + \alpha L\\
        \end{bmatrix},   \numberthis \label{eq : g = 1 algos A}\\
    \mbox{\texttt{GTA-3}:} & \quad A_3({n_c}) = \begin{bmatrix}
            1 - \alpha \mu & \tfrac{\alpha L}{\sqrt{n}} & 0\\
            0 & \beta^{n_c} & \alpha \beta^{n_c}\\
            \beta^{n_c} \sqrt{n}\alpha L^2 & \beta^{n_c}L(2 + \alpha L) & \beta^{n_c}(1 + \alpha L)\\
        \end{bmatrix}.
\end{align*}
\end{corollary}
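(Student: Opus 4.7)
The plan is to derive each of the three matrices by direct substitution into the general form \eqref{eq : g = 1 general A} established in \cref{lem:lyapunov g = 1}, using the specific choices of communication matrices for each algorithm listed in \cref{tab: Algorithm Def}. The only nontrivial ingredient beyond the substitution is a uniform bound on $\|\Zmbf_1^{n_c} - I_{nd}\|_2$, which appears in the $(3,2)$-entry of $A(n_c)$ and must be converted to the explicit constant $2$ that shows up in the corollary.

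First I would compute the quantities $\beta_i^{n_c}$ for $i=1,2,3,4$ under each algorithm. For the three rows of \cref{tab: Algorithm Def}, whenever $\Wmbf_i = \Wmbf$ we immediately have $\beta_i^{n_c} = \beta^{n_c}$ from the definition. Whenever $\Wmbf_i = I_n$ (which occurs for $\Wmbf_2,\Wmbf_4$ in \texttt{GTA-1} and for $\Wmbf_4$ in \texttt{GTA-2}), one has $I_n^{n_c} - \tfrac{1_n 1_n^T}{n} = I_n - \tfrac{1_n 1_n^T}{n}$, which is the orthogonal projector onto the $(n-1)$-dimensional subspace orthogonal to $1_n$, hence has spectral norm equal to $1$, giving $\beta_i^{n_c} = 1$. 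Substituting these values into \eqref{eq : g = 1 general A} reproduces the first and second rows of each of $A_1(n_c)$, $A_2(n_c)$, $A_3(n_c)$ verbatim, and also yields the correct $(3,1)$- and $(3,3)$-entries of each matrix.

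For the $(3,2)$-entry, I would bound $\|\Zmbf_1^{n_c} - I_{nd}\|_2 \leq 2$ as follows: since $\Zmbf_1^{n_c} = \Wmbf^{n_c}\otimes I_d$ and $\Wmbf$ is a symmetric doubly stochastic mixing matrix, its eigenvalues lie in $[-1,1]$, hence $\|\Wmbf^{n_c}\|_2 \leq 1$ and $\|\Zmbf_1^{n_c}\|_2 \leq 1$; then the triangle inequality gives $\|\Zmbf_1^{n_c} - I_{nd}\|_2 \leq \|\Zmbf_1^{n_c}\|_2 + \|I_{nd}\|_2 \leq 2$. Plugging this bound into the $(3,2)$-entry of \eqref{eq : g = 1 general A} together with the corresponding value of $\beta_4^{n_c}$ produces $L(2+\alpha L)$ for \texttt{GTA-1} and \texttt{GTA-2} (where $\beta_4^{n_c}=1$) and $\beta^{n_c} L(2+\alpha L)$ for \texttt{GTA-3} (where $\beta_4^{n_c}=\beta^{n_c}$), matching the stated expressions.

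There is no real obstacle in this argument; the corollary is effectively a bookkeeping statement. The only point requiring any thought is to notice that the corollary replaces the operator-norm quantity $\|\Zmbf_1^{n_c}-I_{nd}\|_2$ with its worst-case constant bound $2$, which loosens the matrix $A(n_c)$ slightly but keeps the component-wise inequality $r_{k+1} \leq A(n_c) r_k$ valid because every other entry is nonnegative and the only occurrence of $\|\Zmbf_1^{n_c}-I_{nd}\|_2$ is added to nonnegative quantities in the $(3,2)$-entry. I would close by noting that this monotone replacement preserves the conclusion of \cref{lem:lyapunov g = 1} and hence delivers the three stated matrices.
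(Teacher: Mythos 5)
Your proposal is correct and follows exactly the paper's (one-line) argument: substitute the values of $\beta_i^{n_c}$ from \cref{tab: Algorithm Def} into \eqref{eq : g = 1 general A} and replace $\|\Zmbf_1^{n_c}-I_{nd}\|_2$ by the upper bound $2$. Your added observations—that $\beta_i^{n_c}=1$ when $\Wmbf_i=I_n$ and that the monotone replacement in the $(3,2)$-entry preserves the componentwise inequality $r_{k+1}\leq A(n_c)r_k$—are accurate and simply make explicit what the paper leaves implicit.
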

\begin{proof}
Substituting the matrix values for each method in \eqref{eq : g = 1 general A} and using $\|\Zmbf_1^{n_c} - I_{nd}\|_2 \leq 2$ gives the desired result. 
\end{proof}
\qed
The convergence properties of \texttt{GTA} when $n_g=1$ can be analyzed using the spectral radius of the matrix $A({n_c})$. We now qualitatively establish the effect of $n_c$ on $\rho(A({n_c}))$, the spectral radius of the matrix $A({n_c})$, and the relative ordering between $\rho(A_1({n_c}))$,  $ \rho(A_2({n_c}))$ and $ \rho(A_3({n_c}))$.

\btheorem   \label{th.incr rates}
Suppose \cref{asum.convex and smooth} holds and the number of gradient steps in each outer iteration of  \cref{alg : Deterministic} is set to one (i.e., $n_g=1$). If $\alpha \leq \tfrac{1}{L}$, then as ${n_c}$ increases, $\rho(A({n_c}))$ decreases where $A(n_c)$ is defined in \eqref{eq : g = 1 general A}. Thus, as ${n_c}$ increases, $\rho(A_i({n_c}))$ decreases, for $i=1,2,3$  defined in \eqref{eq : g = 1 algos A}. Moreover, if all three methods described in \cref{tab: Algorithm Def} ({\texttt{GTA-1}}, \texttt{GTA-2} and \texttt{GTA-3}) employ the same step size, 
\begin{align*}
\rho(A_1({n_c})) \geq \rho(A_2({n_c})) \geq \rho(A_3({n_c})),
\end{align*}
where the matrices $A_1({n_c})$, $A_2({n_c})$ and $A_3({n_c})$ are defined in \eqref{eq : g = 1 algos A}.
\etheorem
\bproof
Note that $A(n_c) \geq 0$ and $A(n_c) \geq A(n_c + 1) \geq 0$.  By \cite[Corollary 8.1.19]{horn2012matrix}, it follows that $\rho(A(n_c)) \geq \rho(A(n_c + 1))$. The same argument is applicable for $A_1({n_c})$, $A_2({n_c})$ and $A_3({n_c})$. Now, observe that $A_1({n_c}) \geq A_2({n_c}) \geq A_3({n_c}) \geq 0$ when the same step size is employed. Thus, again by \cite[Corollary 8.1.19]{horn2012matrix}, it follows that $\rho(A_1({n_c})) \geq \rho(A_2({n_c})) \geq \rho(A_3({n_c}))$.
\eproof
\qed

We now derive  conditions for establishing a  linear rate of convergence to the solution for \cref{alg : Deterministic} when $n_g = 1$ in terms of network parameters ($\beta_1, \beta_2, \beta_3, \beta_4$) and objective function parameters ($L$, $\mu$, $\kappa = \tfrac{L}{\mu}$).

\btheorem \label{th. general g=1 step cond}
Suppose \cref{asum.convex and smooth} holds and the number of gradient steps at each outer iteration of \cref{alg : Deterministic} is set to one (i.e., $n_g=1$). If the matrix $A({n_c})$ is irreducible, $\beta_1, \beta_3 < 1$ and 
\begin{align} \label{eq : g = 1 gen step cond}
    \alpha < \min \left\{\tfrac{1}{L}, \tfrac{1 - \beta_3^{n_c}}{L\beta_4^{n_c}} , \tfrac{(1 - \beta_1^{n_c} + 2\beta_2^{n_c})}{2\beta_2^{n_c}\kappa(L + \mu)} \left(\sqrt{1 + \tfrac{4(1-\beta_1^{n_c})(1-\beta_3^{n_c})\beta_2^{n_c}(\kappa+1)}{\beta_4^{n_c}(1 - \beta_1^{n_c} + 2\beta_2^{n_c})^2}} - 1\right)\right\},
\end{align}
then, for all $\epsilon > 0$ there exists a constant $C_\epsilon>0$ such that, for all $k\geq 0$,
\begin{align*}
    \|r_{k}\|_2 \leq C_\epsilon(\rho(A({n_c})) + \epsilon)^k \|r_0\|_2, \quad \text{where } \; \rho(A({n_c})) < 1.
\end{align*}
\etheorem

\bproof
Following \cite[Lemma 5]{pu2021distributed}, derived from the Perron-Forbenius Theorem \cite[Theorem 8.4.4]{horn2012matrix} for a $3\times3$ matrix, when the matrix $A({n_c})$ is non-negative and irreducible, it is sufficient to show that the diagonal elements of $A({n_c})$ are less than one and that $\det(I_3 - A({n_c})) > 0$ in order to guarantee $\rho(A({n_c})) < 1$. We upper bound $\|\Zmbf_1^{n_c} - I_{nd}\|_2 \leq 2$ in $A(n_c)$ for the results.

Let us first consider the diagonal elements of the matrix $A({n_c})$. The first element is, $1 - \alpha \mu \leq 1 -\tfrac{\mu}{L} < 1$ by \eqref{eq : g = 1 gen step cond}. The second element is $\beta_1^{n_c} < 1$ as $\beta_1 < 1$. Finally, the third element is $\beta_3^{n_c} + \alpha\beta_4^{n_c}L < \beta_3^{n_c} + \tfrac{1 - \beta_3^{n_c}}{\beta_4^{n_c}L}\beta_4^{n_c}L = 1$ due to \eqref{eq : g = 1 gen step cond} and $\beta_3 < 1$.

Next, let us consider
\begin{align*}
&\det(I_3 - A({n_c}))\\
= &-\alpha(\alpha^2L^2\beta_2^{n_c}\beta_4^{n_c}\left(L + \mu \right) + \alpha\mu L\beta_4^{n_c}\left(1 - \beta_1^{n_c} + 2\beta_2^{n_c}\right) - \mu\left(1 - \beta_1^{n_c}\right)\left(1 - \beta_3^{n_c}\right)) \\
=&-L^2\beta_2^{n_c}\beta_4^{n_c}(L + \mu)\alpha(\alpha - \alpha_l)(\alpha - \alpha_u), 
\end{align*}
where $\alpha_l = \alpha_1 - \alpha_2$, $\alpha_u = \alpha_1 + \alpha_2$, and 
\begin{align*}
\alpha_1 = \tfrac{-(1 - \beta_1^{n_c} + 2\beta_2^{n_c})}{2\beta_2^{n_c}\kappa(L + \mu)} \quad \text{and} \quad 
\alpha_2 = -\alpha_1\sqrt{1 + \tfrac{4(1-\beta_1^{n_c})(1-\beta_3^{n_c})\beta_2^{n_c}(\kappa+1)}{\beta_4^{n_c}(1 - \beta_1^{n_c} + 2\beta_2^{n_c})^2}}. 
\end{align*}
Observe that $\alpha_l < 0 < \alpha_u$ and $\alpha_2 > |\alpha_1|$.
From \eqref{eq : g = 1 gen step cond}, we have $0<\alpha < \alpha_u$. Therefore, $\det(I_3 - A({n_c})) > 0$, which combined with the fact that the diagonal elements of the matrix are less than 1, implies $\rho(A(n_c)) < 1$.

Finally, we bound the norm of error vector $\|r_k\|_2$ by telescoping $r_{i+1} \leq A(n_c) r_{i}$ from $i = 0$ to $k-1$ and the triangle inequality as
\begin{align*}
    \|r_{k}\|_2 &\leq \|A({n_c})^k\|_2\|r_0\|_2.
\end{align*}
From \cite[Corollary 5.6.13]{horn2012matrix}, we can bound $\|A({n_c})^k\|_2 \leq C_{\epsilon}(\rho(A(n_c)) + \epsilon)^k$ where $\epsilon > 0$ and $C_{\epsilon}$ is a positive constant that depends on $A(n_c)$ and $\epsilon$.
\eproof
\qed

The only constraint \cref{th. general g=1 step cond} imposes on the system (network) is $\beta_1, \beta_3 < 1$. This implies that the communication matrices $\Wmbf_1$ and $\Wmbf_3$ must represent connected networks (not necessarily the same network). Properties of $\Wmbf_2$ and $\Wmbf_4$ change the step size requirement but are not part of the sufficient conditions for convergence. \cref{th. general g=1 step cond} also does not require any relation among $\Wmbf_1$, $\Wmbf_2$, $\Wmbf_3$ and $\Wmbf_4$. This allows for more flexibility than the structures considered in the literature. 
The variables can be communicated along different connections within the network. 
We note that if $A(n_c)$ is a reducible matrix, the analysis for the progression of $r_k$ can be further simplified from \cref{lem:lyapunov g = 1}. For example, when 
$\Wmbf = \tfrac{1_n1_n^T}{n}$, i.e., $\beta = 0$, in \texttt{GTA-2} and \texttt{GTA-3}. 
The analysis for these cases is presented in \cref{sec.full graph res}. 

The next result establishes step size conditions that guarantee a linear rate of convergence for the three special cases (\texttt{GTA-1}, \texttt{GTA-2} and \texttt{GTA-3}).

\bcorollary \label{col. g=1 step cond}
Suppose \cref{asum.convex and smooth} holds, $\Wmbf \neq \tfrac{1_n1_n^T}{n}$, and the number of gradient steps at each outer iteration of \cref{alg : Deterministic} is set to one (i.e., $n_g=1$). If the following step size conditions hold for the methods described in \cref{tab: Algorithm Def},
\begin{align*}
    \mbox{\texttt{GTA-1}:} & \quad \alpha < \min \left\{ \tfrac{1 - \beta^{n_c}}{L} , \tfrac{(3 - \beta^{n_c})}{2\kappa(L + \mu)}\left(\sqrt{1 + 4(\kappa + 1)\left( \tfrac{1 - \beta^{n_c}}{3 - \beta^{n_c}} \right)^2} - 1\right) \right\},\\
    \mbox{\texttt{GTA-2}:} & \quad \alpha < \min \left\{\tfrac{1 - \beta^{n_c}}{L} , \tfrac{(1+\beta^{n_c})}{2\kappa(L + \mu)\beta^{n_c}} \left[ \sqrt{1 + 4(\kappa + 1)\beta^{n_c}\left[\tfrac{1 - \beta^{n_c}}{1+\beta^{n_c}}\right]^2} - 1\right]  \right\}, \\
    \mbox{\texttt{GTA-3}:} & \quad \alpha < \min \left\{\tfrac{1}{L}, \tfrac{1 - \beta^{n_c}}{L\beta^{n_c}} , \tfrac{(1 + \beta^{n_c})}{2\kappa(L + \mu)\beta^{n_c}} \left(\sqrt{1 + 4(\kappa + 1)\left(\tfrac{1 - \beta^{n_c}}{1 + \beta^{n_c}}\right)^2} - 1\right)\right\},
\end{align*}
then, for all $\epsilon > 0$ there exist constants $C_{i,\epsilon} > 0$ such that, for all $k\geq 0$,
\begin{align*}
    \|r_{k}\|_2 \leq C_{i,\epsilon}(\rho(A_i({n_c})) + \epsilon)^k \|r_0\|_2, \; \text{where } \; \rho(A_i({n_c})) < 1, \text{ for } \;  i=1, 2, 3.
\end{align*}
\ecorollary
\bproof
The conditions given in \cref{th. general g=1 step cond} are satisfied for all three methods. That is, the matrices are irreducible as $\Wmbf \neq \tfrac{1_n1_n^T}{n}$, i.e., $\beta > 0$ and $\beta_1, \beta_3 < 1$ in all the three methods as $\beta < 1$ because $\Wmbf$ is mixing matrix of a connected network. Thus, we can use \eqref{eq : g = 1 gen step cond} to derive the conditions on the step size for each of the methods. Substituting the values for $\beta_1,\beta_2,\beta_3$ and $\beta_4$ for each method yields the desired result. We should note that in \texttt{GTA-1} and  \texttt{GTA-2}, we  ignore the term $\tfrac{1}{L}$ since $\tfrac{1}{L} > \tfrac{1-\beta^{n_c}}{L}$. 
\eproof
\qed

\cref{col. g=1 step cond} shows how the communication strategy affects the step size when $n_g = 1$. Among the three methods, \texttt{GTA-3} allows for the largest step size, even having the possibility to use the step size $\tfrac{1}{L}$ if sufficiently large number of communications steps are performed (high $n_c$) and depending on $\beta$. Among \texttt{GTA-1} and \texttt{GTA-2}, \texttt{GTA-2} allows for a larger step size. While these share the same first term in the bound, the presence of the $\beta^{n_c}$ factor in the denominator of the second term in \texttt{GTA-2} makes the bound larger than \texttt{GTA-1}, possibly allowing for a larger step size.

\cref{th. general g=1 step cond} states that there exists a step size such that \texttt{GTA} converges at a linear rate when $n_g = 1$. We now proceed to analyze the convergence rate \texttt{GTA} when $n_g = 1$. Before that, we provide a technical lemma that shows that the largest eigenvalue of the matrix $A(n_c)$ is a positive real number. 

\begin{lemma}\label{lem. g=1 spec norm}
Suppose \cref{asum.convex and smooth} holds, the number of gradient steps at each outer iteration of \cref{alg : Deterministic} is set to one (i.e., $n_g=1$) and $\alpha \leq \tfrac{1}{L}$. If the matrix $A({n_c})$ defined in  \eqref{eq : g = 1 general A} is irreducible, then, the spectral radius of $A({n_c})$ is the largest eigenvalue of $A({n_c})$ and is a positive real number. Consequently, if $\Wmbf \neq \tfrac{1_n1_n^T}{n}$, the spectral radius of matrices $A_1({n_c})$, $A_2({n_c})$, $A_3({n_c})$ defined in \eqref{eq : g = 1 algos A} are also positive real numbers and equal to their largest eigenvalues, respectively.
\end{lemma}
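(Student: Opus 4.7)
\bproof[Proof sketch]
The plan is to identify this as a direct application of the Perron--Frobenius theorem for non-negative irreducible matrices. The two things that need to be checked are (i) that $A(n_c)$ has non-negative entries, and (ii) that the three specific matrices $A_1(n_c), A_2(n_c), A_3(n_c)$ inherit irreducibility from the general hypothesis under the additional assumption $\Wmbf \neq \tfrac{1_n 1_n^T}{n}$.

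First, I would verify componentwise non-negativity of $A(n_c)$ as given in \eqref{eq : g = 1 general A}. The only entry that is not manifestly non-negative is $1-\alpha\mu$, and since $\mu \leq L$ (a consequence of \cref{asum.convex and smooth}) together with $\alpha \leq 1/L$, we have $\alpha\mu \leq \mu/L \leq 1$. All other entries are products of non-negative quantities: $\alpha, L, \mu \geq 0$, the spectral norms $\beta_i^{n_c}$ and $\|\Zmbf_1^{n_c} - I_{nd}\|_2$ are non-negative by definition. Once $A(n_c) \geq 0$ is established, irreducibility (assumed in the statement) allows me to invoke the Perron--Frobenius theorem \cite[Theorem 8.4.4]{horn2012matrix}, which guarantees that $\rho(A(n_c))$ is itself an eigenvalue, is simple, is strictly positive, and is real. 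This proves the first claim.

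For the second claim, I would argue that each $A_i(n_c)$ in \eqref{eq : g = 1 algos A} is non-negative (by the same reasoning as above, noting $\beta, \beta^{n_c} \in [0,1)$), and then verify irreducibility of each $A_i(n_c)$ whenever $\beta > 0$ by examining the associated directed graph on the three coordinates. Concretely, strong connectivity follows once I observe that for each $A_i(n_c)$ the off-diagonal pattern
\[
(1,2): \; \tfrac{\alpha L}{\sqrt{n}} > 0,\quad (2,3): \; \alpha\, \text{or}\, \alpha \beta^{n_c} > 0,\quad (3,1): \; \sqrt{n}\alpha L^2 \,\text{or}\, \beta^{n_c}\sqrt{n}\alpha L^2 > 0,
\]
yields the directed cycle $1 \to 2 \to 3 \to 1$; the edges $(3,2)$ are also positive but are not even needed. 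The hypotheses $\alpha > 0$, $L > 0$, and (for $A_2$ and $A_3$) $\beta > 0$ (equivalent to $\Wmbf \neq \tfrac{1_n 1_n^T}{n}$) ensure all these entries are strictly positive. Hence each $A_i(n_c)$ is non-negative and irreducible, and the first part of the lemma applies to conclude that $\rho(A_i(n_c))$ is real, positive, and equal to the largest eigenvalue of $A_i(n_c)$.

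The only potential subtlety is the case $A_1(n_c)$ for which the $(3,1)$ and $(2,3)$ entries do not depend on $\beta$, so irreducibility of $A_1(n_c)$ actually holds without requiring $\beta > 0$; nonetheless, the hypothesis $\Wmbf \neq \tfrac{1_n 1_n^T}{n}$ is needed uniformly to cover $A_2(n_c)$ and $A_3(n_c)$, where positivity of $\beta^{n_c}$ is essential for the $(2,3)$ and $(3,1)$ entries to be non-zero. I do not foresee any genuine obstacle; the argument is essentially a bookkeeping check on the non-zero pattern of a $3\times 3$ matrix followed by invoking Perron--Frobenius.
\eproof
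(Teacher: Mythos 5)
Your proposal is correct and follows exactly the same route as the paper: non-negativity plus irreducibility plus the Perron--Frobenius theorem \cite[Theorem 8.4.4]{horn2012matrix}, with irreducibility of $A_2(n_c)$ and $A_3(n_c)$ hinging on $\beta>0$. The paper's proof is a two-sentence citation of these facts; you simply supply the bookkeeping (the sign of $1-\alpha\mu$ and the directed cycle $1\to2\to3\to1$) that the paper leaves implicit, and your observation that $A_1(n_c)$ is irreducible even when $\beta=0$ is consistent with the paper's remark in \cref{sec.full graph res}.
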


\begin{proof}
The statement about the matrix $A({n_c})$ follows from the Perron-Forbenius Theorem \cite[Theorem 8.4.4]{horn2012matrix}, and the fact that the matrix is 
non-negative and irreducible. Using similar arguments, the statement about the matrices $A_1({n_c})$, $A_2({n_c})$ and $A_3({n_c})$ follows as these matrices are irreducible when $\Wmbf \neq \tfrac{1_n1_n^T}{n}$, i.e., $\beta > 0$.
\end{proof}
\qed

The next theorem provides an upper bound on the convergence rate of \texttt{GTA} for sufficiently small constant step sizes.
\btheorem  \label{th. general g=1 rate bound} 
    Suppose \cref{asum.convex and smooth} holds and the number of gradient steps at each outer iteration of \cref{alg : Deterministic} is set to one (i.e., $n_g=1$). If the matrix $A({n_c})$ is irreducible and $\alpha \leq \tfrac{1}{L}$, then, 
    \begin{align*}
        \rho(A({n_c})) & \leq \,\lambda_u = \max\left\{1 - \tfrac{\alpha\mu}{2}, \hat{\lambda} + \sqrt{2\alpha L \kappa \beta_2^{n_c}\beta_4^{n_c}}\right\}, \numberthis \label{eq : general g = 1 rate}
    \end{align*}
    where $\hat{\lambda} = \tfrac{\beta_1^{n_c} + \beta_3^{n_c} + L\alpha\beta_4^{n_c}  + \sqrt{\left(\beta_1^{n_c} - \beta_3^{n_c} - L\alpha\beta_4^{n_c}\right)^2 + 4\beta_2^{n_c}\beta_4^{n_c} L^2\alpha^2 + 8L\alpha\beta_2^{n_c}\beta_4^{n_c}}}{2}$.
%
\etheorem

\bproof
Using \cref{lem. g=1 spec norm}, we know that the spectral radius of $A(n_c)$ is equal to the largest eigenvalue which is a positive real number.
Following a similar approach to \cite{qu2017harnessing}, we prove $\lambda_u$ is an upper bound on the largest eigenvalue by showing the characteristic equation is non-negative at $\lambda_u$ and strictly increasing for all values greater than $\lambda_u$. Consider 
 \begin{align*}
 g(\lambda) 
= &\det(\lambda I_3 - A({n_c})) \\
= &(\lambda - 1 + \alpha\mu)\left((\lambda - \beta_1^{n_c})(\lambda - \beta_3^{n_c} - \alpha L\beta_4^{n_c}) - \alpha L(2 + \alpha L)\beta_2^{n_c}\beta_4^{n_c}\right) 
\\
& \quad - \alpha^3 L^3 \beta_2^{n_c}\beta_4^{n_c} \\
=& (\lambda - 1 + \alpha\mu) q(\lambda) - \alpha^3 L^3 \beta_2^{n_c}\beta_4^{n_c},
 \end{align*}
where $q(\lambda) = \lambda^2 -\lambda(\beta_1^{n_c} + \beta_3^{n_c} + L\alpha\beta_4^{n_c})  + \beta_1^{n_c}\beta_3^{n_c} + L\alpha\beta_4^{n_c}(\beta_1^{n_c} - 2\beta_2^{n_c} - L\alpha\beta_2^{n_c})$.
Let the roots of the quadratic function $q(\lambda)$ be denoted as $\lambda_1$ and $\lambda_2$. Then, we have,  
\begin{align*}
\max\{\lambda_1, \lambda_2\} 
=& \tfrac{\beta_1^{n_c} + \beta_3^{n_c} + L\alpha\beta_4^{n_c}  + \sqrt{\left(\beta_1^{n_c} + \beta_3^{n_c} + L\alpha\beta_4^{n_c}\right)^2 - 4\left(\beta_1^{n_c}\beta_3^{n_c} + L\alpha\beta_4^{n_c}(\beta_1^{n_c} - 2\beta_2^{n_c} - L\alpha\beta_2^{n_c})\right)}}{2}\\
=& \tfrac{\beta_1^{n_c} + \beta_3^{n_c} + L\alpha\beta_4^{n_c}  + \sqrt{\left(\beta_1^{n_c} - \beta_3^{n_c} - L\alpha\beta_4^{n_c}\right)^2 + 4\beta_2^{n_c}\beta_4^{n_c} L^2\alpha^2 + 8L\alpha\beta_2^{n_c}\beta_4^{n_c}}}{2} 
\end{align*}
Thus, for any 
$\lambda \geq \max\left\{1 - \alpha \mu, \hat{\lambda}\right\}$, the function $g(\lambda)$ is increasing and is lower bounded by $(\lambda - 1 + \alpha\mu)(\lambda - \hat{\lambda})^2 - \alpha^3 L^3 \beta_2^{n_c}\beta_4^{n_c}$. 
By $\lambda_u \geq \max\left\{1 - \alpha \mu, \hat{\lambda}\right\}$, 
\begin{align*}
g(\lambda_u) &\geq (\lambda - 1 + \alpha\mu)(\lambda - \hat{\lambda})^2 - \alpha^3 L^3 \beta_2^{n_c}\beta_4^{n_c} \\
&\geq \left(1 - \tfrac{\alpha\mu}{2} -1 + \alpha\mu\right)(\lambda - \hat{\lambda})^2 - \alpha^3 L^3 \beta_2^{n_c}\beta_4^{n_c} \\
&\geq \tfrac{\alpha \mu}{2}\left(\tfrac{2\alpha L^2\beta_2^{n_c}\beta_4^{n_c}}{\mu}\right)  - \alpha^3 L^3 \beta_2^{n_c}\beta_4^{n_c} \\
&=\alpha^2L^2\beta_2^{n_c}\beta_4^{n_c}(1 - \alpha L) \geq 0,
\end{align*}
where the second and third inequalities are due to the definition of $\lambda_u$ and the final quantity is non-negative since $\alpha \leq \tfrac{1}{L}$. Therefore, by the above arguments, we conclude that $\rho(A({n_c})) \leq \lambda_u$ which completes the proof. 
\eproof
\qed

\cref{th. general g=1 rate bound} is derived independent of the conditions in \cref{th. general g=1 step cond}. 
When $\rho(A(n_c)) < 1$ is imposed using \cref{th. general g=1 rate bound}, $\beta_1,\beta_3 < 1$ is 
a necessary condition for convergence. We show this by constructing a lower bound on $\lambda_u$, $\lambda_u \geq \hat{\lambda} \geq \tfrac{\beta_1^{n_c} + \beta_3^{n_c}+ \left|\beta_1^{n_c} - \beta_3^{n_c}\right|}{2}$.
For convergence we require $\lambda_u < 1$, i.e., $\tfrac{\beta_1 + \beta_3 + |\beta_1 - \beta_3|}{2} < 1$, which implies $\beta_1,\beta_3 < 1$ as $\beta_1, \beta_3 \in [0,1]$. Thus, again we require $\Wmbf_1$ and $\Wmbf_3$ to represent a connected network. The step size condition in \cref{th. general g=1 step cond} is $\mathcal{O}(L^{-1}\kappa^{-0.5})$ while \cref{th. general g=1 rate bound} requires $\mathcal{O}(L^{-1}\kappa^{-1})$, which is more pessimistic. That  said,  the precise and interpretable characterization of the convergence rate in \cref{th. general g=1 rate bound}  allows us to better differentiate amongst the communication strategies and the effect of $n_c$. 

\bcorollary  \label{col. g=1 rate bound}
Suppose \cref{asum.convex and smooth} holds, $\Wmbf \neq \tfrac{1_n1_n^T}{n}$, and the number of gradient steps at each outer iteration of \cref{alg : Deterministic} is set to one (i.e., $n_g=1$). If $\alpha \leq \tfrac{1}{L}$, then, the spectral radii for the methods described in \cref{tab: Algorithm Def} satisfy
\begin{align*}
    \mbox{\texttt{GTA-1}:} & \quad \rho(A_1({n_c}))   \leq \max\left\{1 - \tfrac{\alpha\mu}{2}, \beta^{n_c} + \sqrt{\alpha L} \left(2.5 + \sqrt{2\kappa}\right)\right\},\\
    \mbox{\texttt{GTA-2}:} & \quad \rho(A_2({n_c}))  \leq \max\left\{1 - \tfrac{\alpha\mu}{2}, \beta^{n_c} + \sqrt{\alpha L} \left(2.5 + \sqrt{2\kappa \beta^{n_c}}\right)\right\}, \\
    \mbox{\texttt{GTA-3}:} & \quad \rho(A_3({n_c}))  \leq \max\left\{1 - \tfrac{\alpha\mu}{2}, \beta^{n_c}\left(1 + \sqrt{\alpha L} \left(2.5 + \sqrt{2\kappa}\right)\right)\right\}.
\end{align*}
\ecorollary
\bproof
The conditions in \cref{th. general g=1 rate bound} are satisfied due to \cref{lem. g=1 spec norm}. Thus, we can plug in the values for $\beta_i$ ($i=1,2,3,4$) for each method to get an upper bound on the spectral radii. The upper bound $\lambda_u$ for \texttt{GTA-1} can be simplified as
\begin{align*}
\hat{\lambda} + \sqrt{\tfrac{2\alpha L^2 \beta_2^{n_c}\beta_4^{n_c}}{\mu}} &= \tfrac{2\beta^{n_c} + L\alpha  + \sqrt{5L^2\alpha^2 + 8L\alpha}}{2} + \sqrt{\tfrac{2\alpha L^2}{\mu}} \\
&= \beta^{n_c} + \tfrac{\sqrt{\alpha L}}{2}\left(\sqrt{\alpha L} + 2\sqrt{2\kappa} + \sqrt{8 + 5L\alpha} \right) \\
&\leq \beta^{n_c} + \sqrt{\alpha L} \left(2.5 + \sqrt{2\kappa}\right)
\end{align*}
where the last inequality is due to $\alpha \leq \tfrac{1}{L}$. Following the same approach, $\lambda_u$ for \texttt{GTA-2} can be simplified as
\begin{align*}
\hat{\lambda} + \sqrt{\tfrac{2\alpha L^2 \beta_2^{n_c}\beta_4^{n_c}}{\mu}} &= \tfrac{2\beta^{n_c} + L\alpha  + \sqrt{L^2\alpha^2 + 4L^2\alpha^2\beta^{n_c} + 8L\alpha\beta^{n_c}}}{2} + \sqrt{\tfrac{2\alpha L^2\beta^{n_c}}{\mu}} \\
&= \beta^{n_c} + \tfrac{\sqrt{\alpha L}}{2}\left(\sqrt{\alpha L} + 2\sqrt{2\kappa \beta^{n_c}} + \sqrt{8\beta^{n_c} + 4L\alpha\beta^{n_c} + L\alpha} \right) \\
&\leq \beta^{n_c} + \sqrt{\alpha L} \left(2.5 + \sqrt{2\kappa \beta^{n_c}}\right)
\end{align*}
where the last inequality uses $\alpha \leq \tfrac{1}{L}$ and $\beta < 1$. Finally, the upper bound $\lambda_u$ for \texttt{GTA-3} is
\begin{align*}
\hat{\lambda} + \sqrt{\tfrac{2\alpha L^2 \beta_2^{n_c}\beta_4^{n_c}}{\mu}} &= \tfrac{2\beta^{n_c} + L\alpha\beta^{n_c}  + \sqrt{ 5L^2\alpha^2(\beta^{n_c})^2 + 8L\alpha(\beta^{n_c})^2}}{2} + \sqrt{\tfrac{2\alpha L^2(\beta^{n_c})^2}{\mu}} \\
&= \beta^{n_c}\left(1  + \tfrac{\sqrt{\alpha L}}{2}\left(\sqrt{\alpha L} + 2\sqrt{2\kappa} + \sqrt{8 + 5L\alpha} \right)\right) \\
&\leq \beta^{n_c}\left(1 + \sqrt{\alpha L} \left(2.5 + \sqrt{2\kappa}\right)\right)
\end{align*}
where the last inequality is due to $\alpha \leq \tfrac{1}{L}$ and $\beta < 1$. 
\eproof
\qed

\cref{col. g=1 rate bound} characterizes the effect of  multiple communication steps (when $n_g = 1$) on the convergence rates of \texttt{GTA-1}, \texttt{GTA-2} and \texttt{GTA-3}.
First, the convergence rate improves with increased communications (increase in $n_c$) when $n_g = 1$ for all methods. The improvement is most apparent in \texttt{GTA-3} as increasing $n_c$ drives the second term in the $\max$ bound to zero. Thus, if a sufficient number of communication steps are performed in \texttt{GTA-3}, the method can achieve convergence rates similar to those of gradient descent, i.e., $(1 - \tfrac{\alpha\mu}{2})$. 
The improvement is less apparent in \texttt{GTA-2} and the least in \texttt{GTA-1}. With an increase in $n_c$, the dominating term in the max bound, i.e., $\sqrt{2\alpha L\kappa}$, remains unchanged in \texttt{GTA-1} and changes to $\sqrt{2\alpha L\kappa \beta^{n_c}}$ in \texttt{GTA-2} which is affected by the number of communication steps $n_c$. 


\subsection{\texttt{GTA} with multiple communication and computation \texorpdfstring{($n_c \geq 1, n_g\geq 1$)}{Lg}} \label{sec.mult grads}
In this section, we analyze \texttt{GTA} when multiple computation and communication steps are performed every iteration. We extend the analysis from \cref{sec.mult comms}; the case $n_g = 1$ is a special case of the analysis in this section. The subscript for the inner iteration counter is re-introduced in this section as we consider cases with $n_g > 1$ and the inner loop (Lines 4--6 in \cref{alg : Deterministic}) is executed. We first provide a technical lemma that bounds the errors 
due to the execution of the inner loop. We use this result to extend \cref{lem:lyapunov g = 1} and establish the progression of the error vector $r_k$ when multiple communication and computation steps are performed. Finally, we provide the conditions for linear convergence of \cref{alg : Deterministic} with any composition of communication and computation steps.

\begin{lemma} \label{lem : inner loop deviations}
Suppose \cref{asum.convex and smooth} holds and $\alpha \leq \frac{1}{n_gL}$ in \cref{alg : Deterministic}. Then, for all $k\geq0$ and $1 \leq j \leq n_g$
\begin{align}%
    \|\Bar{\ymbf}_{k,1}\|_2 &\leq \|\ymbf_{k, 1} -  \bar{\ymbf}_{k, 1}\|_2 + L\left\|\xmbf_{k,1} - \bar{\xmbf}_{k,1}\right\|_2 + L\sqrt{n} \| \Bar{x}_{k,1} - x^*\|_2\label{eq:y_bound} \\
    \|\xmbf_{k, j} - \xmbf_{k, 1}\|_2 &\leq 2\alpha (j - 1 ) \|\ymbf_{k, 1}\|_2, \label{eq:iterate_deviation}\\
    \|\xmbf_{k, j} - \xbb_{k, j}\|_2 &\leq 2 \alpha (j - 1)\|\ymbf_{k, 1}\|_2 + \|\xmbf_{k, 1} - \xbb_{k, 1}\|_2, \label{eq:consensus_error_deviation}
\end{align}
\end{lemma}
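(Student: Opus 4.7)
The plan is to prove the three bounds in order. For \eqref{eq:y_bound} I would establish the tracking identity $\bar{y}_{k,1} = h_{k,1}$ by telescoping, as was done in the $n_g=1$ case in the proof of \cref{lem:lyapunov g = 1}. Averaging the inner-loop update yields $\bar{y}_{k,j+1} = \bar{y}_{k,j} + h_{k,j+1} - h_{k,j}$, while averaging the outer-loop update (using that $\Wmbf_3, \Wmbf_4$ are doubly stochastic) gives $\bar{y}_{k+1,1} = \bar{y}_{k,n_g} + h_{k+1,1} - h_{k,n_g}$; chaining these and using the initialization $\ymbf_{0,1} = \nabla\fmbf(\xmbf_{0,1})$ produces $\bar{y}_{k,1} = h_{k,1}$ for every $k\geq 0$. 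Consequently $\|\bar{\ymbf}_{k,1}\|_2 = \sqrt{n}\|h_{k,1}\|_2$, and a triangle inequality around $\hbar_{k,1}$, combined with L-smoothness of each $f_i$ and $\nabla f(x^*)=0$, recovers the last two terms of \eqref{eq:y_bound}; the first term $\|\ymbf_{k,1}-\bar{\ymbf}_{k,1}\|_2\geq 0$ is absorbed as non-negative slack.

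For \eqref{eq:iterate_deviation} I would use induction on $j$, with trivial base case $j=1$. Unrolling the inner loop gives $\xmbf_{k,j+1}-\xmbf_{k,1}=-\alpha\sum_{i=1}^{j}\ymbf_{k,i}$, so the key is to control $\|\ymbf_{k,i}\|_2$. Telescoping the inner $y$-update yields $\ymbf_{k,i}-\ymbf_{k,1}=\nabla\fmbf(\xmbf_{k,i})-\nabla\fmbf(\xmbf_{k,1})$, and L-smoothness together with the inductive hypothesis gives $\|\ymbf_{k,i}\|_2\leq\|\ymbf_{k,1}\|_2(1+2\alpha L(i-1))$. Summing, $\alpha\sum_{i=1}^{j}\|\ymbf_{k,i}\|_2\leq \alpha j\|\ymbf_{k,1}\|_2(1+\alpha L(j-1))$, and the step-size condition $\alpha\leq 1/(n_g L)$ combined with $j\leq n_g$ forces the parenthesized factor to be at most $2$, closing the induction.

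For \eqref{eq:consensus_error_deviation}, the naive triangle inequality $\|\xmbf_{k,j}-\xbb_{k,j}\|_2\leq \|\xmbf_{k,j}-\xmbf_{k,1}\|_2+\|\xmbf_{k,1}-\xbb_{k,1}\|_2+\|\xbb_{k,1}-\xbb_{k,j}\|_2$ paired with \eqref{eq:iterate_deviation} would produce a factor of $4$ rather than the claimed $2$. Instead I would apply $I-\Imbf$ directly to the unrolled identity, obtaining $\xmbf_{k,j}-\xbb_{k,j}=(\xmbf_{k,1}-\xbb_{k,1})-\alpha(I-\Imbf)\sum_{i=1}^{j-1}\ymbf_{k,i}$. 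Using $\|I-\Imbf\|_2\leq 1$ together with the same bound on $\alpha\sum_{i=1}^{j-1}\|\ymbf_{k,i}\|_2$ derived in the previous step delivers the desired factor-of-two coefficient.

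The main obstacle is the circular dependence between $\|\ymbf_{k,i}\|_2$ and $\|\xmbf_{k,i}-\xmbf_{k,1}\|_2$ in \eqref{eq:iterate_deviation}, which is resolved cleanly by induction only because $\alpha\leq 1/(n_g L)$ prevents the geometric amplification across the inner loop from exceeding a constant factor; the same structural observation, combined with exploiting the projection $I-\Imbf$ rather than a generic triangle inequality, is what yields the tight factor of $2$ in \eqref{eq:consensus_error_deviation}.
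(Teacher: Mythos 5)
Your proposal is correct and follows essentially the same route as the paper's proof: the same telescoping identities $\ymbf_{k,j}=\ymbf_{k,1}+\nabla\fmbf(\xmbf_{k,j})-\nabla\fmbf(\xmbf_{k,1})$ and $\bar{y}_{k,1}=h_{k,1}$, the same induction closed by $\alpha L n_g\le 1$ for \eqref{eq:iterate_deviation}, and the same use of the projection $I-\Imbf$ (with $\|I_{nd}-\Imbf\|_2=1$) to avoid the factor of $4$ in \eqref{eq:consensus_error_deviation}. The only cosmetic differences are that you bound $\|\ymbf_{k,i}\|_2$ and then sum rather than bounding $\|\xmbf_{k,i}-\xmbf_{k,1}\|_2$ inside the recursion, and you apply $I-\Imbf$ to the unrolled sum rather than to $\xmbf_{k,j}-\xmbf_{k,1}$; your handling of \eqref{eq:y_bound} as written (absorbing the nonnegative consensus term as slack) is also consistent with the paper, whose own proof in fact bounds $\|\ymbf_{k,1}\|_2$ via the triangle inequality through $\|\bar\ymbf_{k,1}\|_2$.
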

\bproof
Taking a telescopic sum of $\ymbf_{k, i+1} = \ymbf_{k, i} + \nabla \fmbf(\xmbf_{k, i+1}) - \nabla \fmbf(\xmbf_{k, i})$, the inner loop update, from $i=1$ to $j-1$ we get
\begin{align}
    \ymbf_{k, j} & = \ymbf_{k, 1} + \nabla \fmbf(\xmbf_{k, j}) - \nabla \fmbf(\xmbf_{k, 1}). \label{eq: v telescope}
\end{align}
Using~\eqref{eq: v telescope},  $\ymbf_{k+1, 1}$ can be expressed as
\begin{equation}\label{eq : y general telescope sum}
\begin{aligned}
    \ymbf_{k+1, 1} &= \Zmbf_3^{n_c}\left(\ymbf_{k, 1} + \nabla \fmbf(\xmbf_{k, n_g}) - \nabla \fmbf(\xmbf_{k, 1})\right) + \Zmbf_4^{n_c}\left(\nabla \fmbf(\xmbf_{k+1, 1}) - \nabla \fmbf(\xmbf_{k, n_g})\right)   \\  
    &= \Zmbf_3^{n_c}\ymbf_{k, 1} + \Zmbf_4^{n_c}\nabla \fmbf(\xmbf_{k+1, 1}) - \Zmbf_3^{n_c}\nabla \fmbf(\xmbf_{k, 1}) + \Zmbf_3^{n_c}\nabla \fmbf(\xmbf_{k, n_g}) - \Zmbf_4^{n_c} \nabla \fmbf(\xmbf_{k, n_g})
\end{aligned}
\end{equation}
Taking the component-wise average across all nodes in \eqref{eq: v telescope} and \eqref{eq : y general telescope sum} and using~\eqref{eq : derivative terms define}, it follows that
\begin{align}
    \Bar{y}_{k, j} & = \Bar{y}_{k, 1} + h_{k, j} - h_{k, 1}, \label{eq: g > 1 y_j_telescope}\\
    \Bar{y}_{k+1, 1} & = \Bar{y}_{k, 1} + h_{k+1, 1} - h_{k, 1}.  \label{eq : g > 1 y bar telescope}
\end{align}
Performing a similar telescopic sum as \eqref{eq:y_bar_telescope} with \eqref{eq : g > 1 y bar telescope}, we obtain $\Bar{y}_{k, 1} = h_{k, 1}$. Thus, substituting $\Bar{y}_{k, 1} = h_{k, 1}$ in \eqref{eq: g > 1 y_j_telescope} yields 
\begin{align}   \label{eq: v_bar_telescope}
    \bar{y}_{k, j} &= \bar{y}_{k, 1} + h_{k, j} - h_{k, 1} = h_{k, j}.
\end{align}
By the triangle inequality, $\|\ymbf_{k, 1}\|_2 \leq \|\ymbf_{k, 1} -  \bar{\ymbf}_{k, 1}\|_2   + \| \bar{\ymbf}_{k, 1} \|_2$, where $\|\bar{\ymbf}_{k, 1}\|_2$ can be bounded by a similar procedure to \eqref{eq : y_bar_bound} due to $\Bar{y}_{k, 1} = h_{k, 1}$ to yield \eqref{eq:y_bound}.

Now, taking the telescopic sum of the inner loop update $\xmbf_{k, i} = \xmbf_{k, i-1} - \alpha \ymbf_{k, i-1}$ from $i=2$ to $j$ yields $\xmbf_{k, j} = \xmbf_{k, 1} - \alpha \sum_{i=1}^{j-1}\ymbf_{k, i}$. The sum $\sum_{i = 1}^{j-1} \ymbf_{k, i}$ is evaluated using \eqref{eq: v telescope} as
\begin{equation}\label{eq: sum_y_k_j}
\begin{aligned}   
    \sum_{i = 1}^{j -1} \ymbf_{k, j} & = \ymbf_{k, 1} + \sum_{i = 2}^{j-1} \ymbf_{k, i} + \nabla \fmbf(\xmbf_{k, i}) - \nabla \fmbf(\xmbf_{k, 1}) \\
    &= (j - 1) \ymbf_{k, 1} + \sum_{i = 2}^{j - 1} \nabla \fmbf(\xmbf_{k, i}) - \nabla \fmbf(\xmbf_{k, 1}).
\end{aligned}
\end{equation}
By the triangle inequality and \cref{asum.convex and smooth}, it follows that
\begin{align*}
\|\xmbf_{k, j} - \xmbf_{k, 1}  \|_2 &\leq \alpha(j-1)\|\ymbf_{k, 1}\|_2 + \alpha  \sum_{i=1}^{j-1}\| \nabla \fmbf(\xmbf_{k, i}) - \nabla \fmbf(\xmbf_{k, 1})\|_2 \\
&\leq  \alpha(j-1)\|\ymbf_{k, 1}\|_2 + \alpha L \sum_{i=1}^{j-1}\| \xmbf_{k, i} - \xmbf_{k, 1}\|_2.
\end{align*}
Now we apply induction to show \eqref{eq:iterate_deviation} using the above inequality.
\begin{align*}
    \mbox{For $j = 1$,} \qquad \|\xmbf_{k, 1} - \xmbf_{k, 1}\|_2 &= 0 =  2 \alpha (1 - 1) \|\ymbf_{k, 1}\|_2. \\
    \mbox{For $j \geq 2$,} \qquad \|\xmbf_{k, j} - \xmbf_{k, 1}\|_2 &\leq \alpha(j-1)\|\ymbf_{k, 1}\|_2 + \alpha L \sum_{i=1}^{j-1}\| \xmbf_{k, i} - \xmbf_{k, 1}\|_2 \\
    &\leq \alpha(j-1)\|\ymbf_{k, 1}\|_2 + 2 \alpha^2 L  \sum_{i=1}^{j-1} (i-1) \|\ymbf_{k, 1}\|_2 \\
    &=  \alpha(j-1)\|\ymbf_{k, 1}\|_2 + 2 \alpha^2 L  \|\ymbf_{k, 1}\|_2 \tfrac{(j-2)(j-1)}{2} \\
    &= \alpha(j-1) \left(1 + \alpha L (j-2) \right)\|\ymbf_{k, 1}\|_2  \\
    &\leq 2 \alpha(j-1)\|\ymbf_{k, 1}\|_2,
\end{align*}
where the first equality uses the sum of $j-1$ natural numbers and the second to last inequality is due to $\alpha L \leq \frac{1}{n_g}$ and $j \leq n_g$. 

By \eqref{eq:iterate_deviation}, the triangle inequality and $\|I_{nd} - \Imbf\|_2 = 1$, it follows that
\begin{align*}
    \|\xmbf_{k, j} - \xbb_{k, j}\|_2 &\leq \|\xmbf_{k, j} - \xmbf_{k, 1} + \xbb_{k,1} -  \xbb_{k, j}\|_2 + \|\xmbf_{k, 1} - \xbb_{k, 1}\|_2    \\
    &\leq \|(I_{nd} - \Imbf)(\xmbf_{k, j} - \xmbf_{k, 1})\|_2 + \|\xmbf_{k, 1} - \xbb_{k, 1}\|_2    \\
    &\leq \|\xmbf_{k, j} - \xmbf_{k, 1}\|_2 + \|\xmbf_{k, 1} - \xbb_{k, 1}\|_2.    
\end{align*}
\eproof
\qed

The two bounds in \cref{lem : inner loop deviations} (\eqref{eq:iterate_deviation} and \eqref{eq:consensus_error_deviation}) bound the deviation of the local decision variables from the start of the outer iteration, $\|\xmbf_{k, j} - \xmbf_{k, 1}\|_2$, and the consensus error, $\|\xmbf_{k, j} - \xbb_{k, j}\|_2$,  in inner iteration $j$, respectively. Combined with \eqref{eq:y_bound}, these quantities are bounded as an $\mathcal{O}(\alpha j)$ multiple of the components of the error vector $r_k$. This property has two implications; $(1)$ if one performs more inner iterations, i.e., increases $n_g$, the constant step size $\alpha$ needs to be reduced to reduce these quantities, $(2)$ if an outer iterate is the optimal solution, the inner loop does not introduce any deviations in the iterates and maintains optimality.

We now establish the progression of error vector $r_k$ under multiple communication and computation steps being performed every iteration in \cref{alg : Deterministic}.

\begin{lemma}\label{lem:lyapunov g > 1}
Suppose \cref{asum.convex and smooth} holds and $\alpha \leq \frac{1}{n_g L}$ in \cref{alg : Deterministic}. Then, for all $k\geq 0$,
\begin{align*}
  r_{k+1} \leq B(n_c, n_g) r_k, \quad \text{where $\; B(n_c, n_g) = A(n_c, n_g) + \alpha L (n_g - 1) E(n_c, n_g)$}, \quad     
\end{align*}
\begin{equation}\label{eq : g > 1 general A}
\begin{aligned}
A(n_c, n_g) &= \begin{bmatrix}
    (1 - \alpha \mu)^{n_g}& \frac{\kappa}{\sqrt{n}}(1 - (1 - \alpha\mu)^{n_g}) & 0\\
    0 & \beta_1^{n_c} & \alpha\left((n_g-1)\beta_1^{n_c} + \beta_2^{n_c}\right)\\
    \sqrt{n}\alpha \beta_4^{n_c} L^2 & \beta_4^{n_c}L(\|\Zmbf_1^{n_c}-I_{nd}\|_2 + \alpha L) & \beta_3^{n_c} + \alpha \beta_4^{n_c} L
    \end{bmatrix} ,\\
E(n_c, n_g) &= \begin{bmatrix}
    \alpha L n_g & \frac{\alpha L n_g}{\sqrt{n}} & \frac{\alpha n_g}{\sqrt{n}}\\
    \sqrt{n}\alpha L \delta_1(n_c, n_g) & \alpha L \delta_1(n_c, n_g) & \alpha \delta_1(n_c, n_g)\\
    \sqrt{n} L\delta_2(n_c, n_g) & L \delta_2(n_c, n_g)& \delta_2(n_c, n_g)
    \end{bmatrix},
\end{aligned}
\end{equation}
and
\begin{equation}\label{eq:delta_errors}
\begin{aligned}
    \delta_1(n_c, n_g) &= 2\beta_2^{n_c} + \beta_1^{n_c}(n_g - 2) ,\\
    \delta_2(n_c, n_g) &= 2 \left( \beta_4^{n_c} \|\Zmbf_1^{n_c} - I_{nd}\|_2 + \tfrac{\beta_4^{n_c}}{n_g} + \beta_3^{n_c} \right).
\end{aligned}
\end{equation}
\end{lemma}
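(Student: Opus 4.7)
The plan is to mirror the three‐step structure of the proof of \cref{lem:lyapunov g = 1}: bound the optimization error $\|\bar{x}_{k+1,1}-x^*\|_2$, the consensus error $\|\xmbf_{k+1,1}-\bar{\xmbf}_{k+1,1}\|_2$, and the tracking consensus error $\|\ymbf_{k+1,1}-\bar{\ymbf}_{k+1,1}\|_2$, and then collect the bounds into the matrix inequality $r_{k+1}\le B(n_c,n_g)\,r_k$. The extra difficulty compared with $n_g=1$ is that every quantity on the right-hand side starts out expressed in terms of the inner iterates at $j=n_g$, while $r_k$ is defined at $j=1$; closing this gap is exactly the job of \cref{lem : inner loop deviations}, and it is what generates the second additive term $\alpha L(n_g-1)E(n_c,n_g)$.

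For the optimization error I would observe that, from \eqref{eq: v_bar_telescope} and taking the average of the outer update for $\xmbf$, the recursion on the mean reduces to an inexact gradient step $\bar{x}_{k,j+1}=\bar{x}_{k,j}-\alpha h_{k,j}$ for $j=1,\dots,n_g$ (with the convention $\bar{x}_{k,n_g+1}=\bar{x}_{k+1,1}$). Adding and subtracting $\alpha\hbar_{k,j}$, applying one step of gradient descent on $f$ (valid because $\alpha\le\tfrac{1}{n_g L}\le\tfrac{1}{L}$), using \cref{asum.convex and smooth} to control $\|h_{k,j}-\hbar_{k,j}\|_2\le \tfrac{L}{\sqrt{n}}\|\xmbf_{k,j}-\bar{\xmbf}_{k,j}\|_2$, and telescoping yields
\[
\|\bar{x}_{k+1,1}-x^*\|_2\le (1-\alpha\mu)^{n_g}\|\bar{x}_{k,1}-x^*\|_2 + \tfrac{\alpha L}{\sqrt{n}}\sum_{j=1}^{n_g}(1-\alpha\mu)^{n_g-j}\|\xmbf_{k,j}-\bar{\xmbf}_{k,j}\|_2.
\]
Plugging \eqref{eq:consensus_error_deviation} into this sum produces two contributions: the first, after evaluating the geometric sum $\sum_{j=1}^{n_g}(1-\alpha\mu)^{n_g-j}=\tfrac{1-(1-\alpha\mu)^{n_g}}{\alpha\mu}$, gives the $\tfrac{\kappa}{\sqrt{n}}(1-(1-\alpha\mu)^{n_g})$ entry of $A$; the second, after bounding $\sum_{j=1}^{n_g}(j-1)\le \tfrac{n_g(n_g-1)}{2}$ and then invoking \eqref{eq:y_bound} to trade $\|\ymbf_{k,1}\|_2$ for the three components of $r_k$, supplies the first row of $\alpha L(n_g-1)E$.

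For the two consensus rows I would start from the outer updates
\[
\xmbf_{k+1,1}-\bar{\xmbf}_{k+1,1}=(\Zmbf_1^{n_c}-\Imbf)(\xmbf_{k,n_g}-\bar{\xmbf}_{k,n_g})-\alpha(\Zmbf_2^{n_c}-\Imbf)(\ymbf_{k,n_g}-\bar{\ymbf}_{k,n_g}),
\]
and the analogous identity for $\ymbf_{k+1,1}-\bar{\ymbf}_{k+1,1}$ used in \eqref{eq: y_bar_bnd_twoterm}, taking norms and applying \eqref{eq : beta and Z} exactly as in the proof of \cref{lem:lyapunov g = 1}. This gives a bound written in terms of inner‐iterate norms $\|\xmbf_{k,n_g}-\bar{\xmbf}_{k,n_g}\|_2$, $\|\ymbf_{k,n_g}-\bar{\ymbf}_{k,n_g}\|_2$, and $\|\nabla\fmbf(\xmbf_{k+1,1})-\nabla\fmbf(\xmbf_{k,n_g})\|_2$. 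I would then convert all three back to $j=1$ quantities: the first using \eqref{eq:consensus_error_deviation}; the second using the analogous identity $\ymbf_{k,n_g}-\bar{\ymbf}_{k,n_g}=\ymbf_{k,1}-\bar{\ymbf}_{k,1}+(\nabla\fmbf(\xmbf_{k,n_g})-\nabla\fmbf(\xmbf_{k,1}))-(\Imbf)(\cdots)$ derived from \eqref{eq: v telescope}, combined with Lipschitz smoothness and \eqref{eq:iterate_deviation}; and the third by writing $\xmbf_{k+1,1}-\xmbf_{k,n_g}=(\Zmbf_1^{n_c}-I_{nd})(\xmbf_{k,n_g}-\bar{\xmbf}_{k,n_g})-\alpha\Zmbf_2^{n_c}\ymbf_{k,n_g}$ as in the $n_g=1$ proof and applying \eqref{eq:iterate_deviation}--\eqref{eq:y_bound}. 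The terms proportional to $\|\ymbf_{k,1}\|_2$ that appear are again replaced using \eqref{eq:y_bound}, which redistributes them across the three columns of the second and third rows of $E$ and accounts for the $\delta_1,\delta_2$ definitions in \eqref{eq:delta_errors} and for the extra $(n_g-1)\beta_1^{n_c}$ term in the $(2,3)$ entry of $A$.

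The main obstacle is organizational rather than analytical: one has to be careful that every inner‐to‐outer conversion uses \cref{lem : inner loop deviations} in a way that produces contributions of the form ``something already in $A(n_c,1)$'' plus a remainder of size $\mathcal{O}(\alpha(n_g-1))$ times another component of $r_k$, and then check that these remainders aggregate precisely into the $\alpha L(n_g-1)E(n_c,n_g)$ matrix with the stated $\delta_1,\delta_2$. A useful sanity check is that at $n_g=1$ the $E$ matrix is multiplied by zero and the remaining $A(n_c,1)$ collapses to \eqref{eq : g = 1 general A} (using $(1-\alpha\mu)^1=1-\alpha\mu$ and $\tfrac{\kappa}{\sqrt{n}}\alpha\mu=\tfrac{\alpha L}{\sqrt{n}}$), recovering \cref{lem:lyapunov g = 1}. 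Once the three row bounds are in place, concatenating them componentwise gives $r_{k+1}\le B(n_c,n_g)r_k$ and completes the proof.
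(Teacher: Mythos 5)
Your overall architecture is the paper's: the same three row-by-row bounds, the same recursive gradient-descent argument for the optimization error (geometric sum giving $\tfrac{\kappa}{\sqrt n}(1-(1-\alpha\mu)^{n_g})$, arithmetic sum giving the first row of $E$), and the same reliance on \cref{lem : inner loop deviations} to pull inner-iterate quantities back to $j=1$. The first and third rows, as you describe them, reproduce the paper's bounds. However, there is a genuine gap in your plan for the second row ($\|\xmbf_{k+1,1}-\bar{\xmbf}_{k+1,1}\|_2$): you propose to take norms at $j=n_g$ first, obtaining $\beta_1^{n_c}\|\xmbf_{k,n_g}-\xbb_{k,n_g}\|_2+\alpha\beta_2^{n_c}\|\ymbf_{k,n_g}-\ybb_{k,n_g}\|_2$, and then apply \eqref{eq:consensus_error_deviation} to the first term. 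That step replaces $\|\xmbf_{k,n_g}-\xbb_{k,n_g}\|_2$ by $\|\xmbf_{k,1}-\xbb_{k,1}\|_2+2\alpha(n_g-1)\|\ymbf_{k,1}\|_2$, and after \eqref{eq:y_bound} the piece $2\alpha(n_g-1)\beta_1^{n_c}\|\ymbf_{k,1}\|_2$ injects a term $2\sqrt n\,\alpha L(n_g-1)\beta_1^{n_c}$ into the $(2,1)$ entry and $2\alpha L(n_g-1)\beta_1^{n_c}$ into the $(2,2)$ entry. The stated matrix has $(2,1)$ entry $\sqrt n\,\alpha^2L^2(n_g-1)\delta_1(n_c,n_g)$, which is smaller by a factor of order $\alpha L$; your route therefore proves a valid linear recursion but with a strictly looser matrix than $B(n_c,n_g)$, so it does not establish the lemma as stated.

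The missing idea is a cancellation that is only visible before norms are taken. The paper substitutes the telescoped inner-loop identity $\xmbf_{k,n_g}=\xmbf_{k,1}-\alpha\sum_{j=1}^{n_g-1}\ymbf_{k,j}$ and \eqref{eq: sum_y_k_j} directly into $\xmbf_{k+1,1}-\xbb_{k+1,1}$, so that the large term $-\alpha(n_g-1)\Zmbf_1^{n_c}\ymbf_{k,1}$ meets the projector and becomes $-\alpha(n_g-1)(\Zmbf_1^{n_c}-\Imbf)(\ymbf_{k,1}-\ybb_{k,1})$, i.e.\ it couples only to the $\ymbf$-consensus error (producing the $(n_g-1)\beta_1^{n_c}$ part of the $(2,3)$ entry of $A$), while only the gradient-difference remainders --- which already carry a factor $\alpha L$ via \eqref{eq:iterate_deviation} --- end up multiplying the full $\|\ymbf_{k,1}\|_2$ and populate $\delta_1$. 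Your ordering (norms first, then \cref{lem : inner loop deviations}) discards the projection structure and attaches the $O(\alpha(n_g-1))$ coefficient to $\|\ymbf_{k,1}\|_2$ instead of to $\|\ymbf_{k,1}-\ybb_{k,1}\|_2$. The same commutation issue does not arise in the third row because there the coefficient of $\|\xmbf_{k,n_g}-\xbb_{k,n_g}\|_2$ is $\beta_4^{n_c}L\|\Zmbf_1^{n_c}-I_{nd}\|_2$, which already carries the factor of $L$ needed to land inside $\alpha L(n_g-1)E$. To repair the proof, derive the exact identity for $\xmbf_{k+1,1}-\xbb_{k+1,1}$ by substitution first, and only then take norms and invoke \eqref{eq:iterate_deviation} and \eqref{eq:y_bound}.
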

\bproof We first consider the optimization error of the average iterates $\xbar_{k, 1}$. Similar to \eqref{eq : g = 1 opt bound}, we  bound the  optimization error as 
\begin{align*}
    \|\Bar{x}_{k, j+1} - x^*\|_2 & \leq (1-\alpha \mu) \|\Bar{x}_{k, j} - x^*\|_2 + \tfrac{\alpha L}{\sqrt{n}} \| \xmbf_{k, j} - \xbb_{k, j}\|_2 \quad  \forall \,\, 1 \leq j \leq n_g - 1,
\end{align*}
where the above holds by using \eqref{eq: v_bar_telescope} (the generalization of \eqref{eq:y_bar_telescope}) and the error bound of gradient descent from \cite[Theorem 2.1.14]{nesterov1998introductory}.  
Next, we bound the optimization error in $\xbar_{k+1, 1}$ with respect to $\xmbf_{k, n_g}$ in a similar manner as, 
\begin{align*}
    \|\Bar{x}_{k+1, 1} - x^*\|_2 & \leq (1-\alpha \mu) \|\Bar{x}_{k, n_g} - x^*\|_2 + \tfrac{\alpha L}{\sqrt{n}} \| \xmbf_{k, n_g} - \xbb_{k, n_g}\|_2.
\end{align*}
Recursively applying the above two bounds, by \eqref{eq:consensus_error_deviation} it follows that,
\begin{align*}
    \|\Bar{x}_{k+1, 1} - x^*\|_2 & 
    \leq (1-\alpha \mu)^{n_g} \|\Bar{x}_{k, 1} - x^*\|_2  + \tfrac{\alpha L}{\sqrt{n}} \sum_{j = 1}^{n_g}(1 - \alpha\mu)^{n_g - j} \|\xmbf_{k,1} - \xbb_{k, 1}\|_2 \\
    & \quad + \tfrac{2\alpha^2 L}{\sqrt{n}} \sum_{j = 1}^{n_g}(1 - \alpha\mu)^{n_g - j} (j-1) \|\ymbf_{k, 1}\|_2\\
    & \leq (1-\alpha \mu)^{n_g} \|\Bar{x}_{k, 1} - x^*\|_2  + \tfrac{\kappa}{\sqrt{n}} \left[1 - (1 - \alpha\mu)^{n_g}\right] \|\xmbf_{k, 1} - \xbb_{k, 1}\|_2 \\
    & \quad + \tfrac{\alpha^2 L}{\sqrt{n}} n_g(n_g - 1) \|\ymbf_{k, 1}\|_2,
\end{align*}
where the last inequality is due to the fact that $(1 - \alpha\mu)^{n_g - j} \leq 1 \,\forall j = 1, 2, ..., n_g$ due to $\alpha \leq \frac{1}{L n_g}$, the coefficient of the second term is the sum of a geometric progression, and the coefficient of the third term is the sum of the first $n_g-1$ natural numbers. 
By \eqref{eq:y_bound}, we obtain the desired bound on the optimization error.

Next, we consider the consensus error in $\xmbf_{k, 1}$,
\begin{align*}
    &\xmbf_{k + 1, 1} - \xbb_{k+1, 1} \\
    =& \left(I_{nd} - \Imbf\right)\xmbf_{k+1, 1} = \left(I_{nd} - \Imbf \right)(\Zmbf_1^{n_c} \xmbf_{k, n_g} - \alpha \Zmbf_2^{n_c}\ymbf_{k, n_g})    \\
    =& \left(I_{nd} - \Imbf \right)\left(\Zmbf_1^{n_c}\left(\xmbf_{k, 1} - \alpha\sum_{j = 1}^{n_g - 1}\ymbf_{k, j}\right) - \alpha \Zmbf_2^{n_c}\ymbf_{k, n_g}\right) \\
    =& \left(I_{nd} - \Imbf\right)\left(\Zmbf_1^{n_c}\xmbf_{k, 1} - \alpha \Zmbf_1^{n_c}\left((n_g-1) \ymbf_{k, 1} + \sum_{j = 2}^{n_g-1} \nabla \fmbf(\xmbf_{k, j}) - \nabla \fmbf(\xmbf_{k, 1})\right)\right) \\
    & \; - \alpha \left(I_{nd} - \Imbf\right) \left(\Zmbf_2^{n_c}( \ymbf_{k, 1} + \nabla \fmbf(\xmbf_{k, n_g}) - \nabla \fmbf(\xmbf_{k, 1}))\right)\\
    =& \left(\Zmbf_1^{n_c} - \Imbf\right)(\xmbf_{k, 1} - \xbb_{k, 1}) -\alpha \left((n_g-1)\left(\Zmbf_1^{n_c} - \Imbf\right) + \left(\Zmbf_2^{n_c} - \Imbf\right)\right) (\ymbf_{k, 1} - \ybb_{k, 1})  \\
    & \; - \alpha\left(\Zmbf_2^{n_c} - \Imbf\right)\left(\nabla \fmbf(\xmbf_{k, n_g}) - \nabla \fmbf(\xmbf_{k, 1})\right) - \alpha\left(\Zmbf_1^{n_c} - \Imbf\right)\left(\sum_{j = 2}^{n_g-1} \nabla \fmbf(\xmbf_{k, j}) - \nabla \fmbf(\xmbf_{k, 1})\right) 
\end{align*}
where the second equality is a telescopic sum of the inner loop update ($\xmbf_{k, i} = \xmbf_{k, i-1} - \alpha \ymbf_{k, i-1}$) from $i=2$ to $n_g$ and the third equality is due to \eqref{eq: sum_y_k_j}. By the triangle inequality, \cref{asum.convex and smooth} and \eqref{eq : beta and Z},
\begin{align*}
    \|\xmbf_{k + 1, 1} - \xbb_{k+1, 1}\|_2 &\leq \beta_1^{n_c}\|\xmbf_{k, 1} - \xbb_{k, 1}\|_2 + \alpha \left((n_g-1)\beta_1^{n_c} + \beta_2^{n_c}\right) \|\ymbf_{k, 1} - \ybb_{k, 1}\|_2 \\
    & \quad + \alpha\beta_2^{n_c} L\|\xmbf_{k, n_g} - \xmbf_{k, 1}\|_2 + \alpha\beta_1^{n_c} L \sum_{j = 2}^{n_g-1} \|\xmbf_{k, j} - \xmbf_{k, 1}\|_2.
\end{align*}
Adding $\alpha\beta_1^{n_c} L \|\xmbf_{k, 1} - \xmbf_{k, 1}\|_2 = 0$ to the right hand side and \eqref{eq:iterate_deviation}, it follows,
\begin{align*}
    \|\xmbf_{k + 1, 1} - \xbb_{k+1, 1}\|_2 &\leq \beta_1^{n_c}\|\xmbf_{k, 1} - \xbb_{k, 1}\|_2 + \alpha \left((n_g-1)\beta_1^{n_c} + \beta_2^{n_c}\right) \|\ymbf_{k, 1} - \ybb_{k, 1}\|_2 \\
    & \quad + \alpha^2\beta_2^{n_c} L (2(n_g - 1)) \|y_{k, 1}\|_2 + \alpha^2\beta_1^{n_c} L \sum_{j = 1}^{n_g-1} 2(j - 1) \|y_{k, 1}\|_2 \\
    &= \beta_1^{n_c}\|\xmbf_{k, 1} - \xbb_{k, 1}\|_2 + \alpha \left((n_g-1)\beta_1^{n_c} + \beta_2^{n_c}\right) \|\ymbf_{k, 1} - \ybb_{k, 1}\|_2 \\
    & \quad + 2\alpha^2 L (n_g - 1) \left( \beta_2^{n_c} + \beta_1^{n_c}\tfrac{(n_g - 2)}{2}  \right)\|\ymbf_{k, 1}\|_2.
\end{align*}
The desired bound for the consensus error in $\xmbf_{k, 1}$ follows by using \eqref{eq:y_bound}.

Finally, we consider the consensus error in $\ymbf_{k, 1}$. By \eqref{eq : y general telescope sum},
\begin{align*}
    \ymbf_{k + 1, 1} - \ybb_{k+1, 1} &= \left(I_{nd} - \Imbf\right)\ymbf_{k+1, 1} \\
    &= \left(I_{nd} - \Imbf\right)(\Zmbf_3^{n_c}\ymbf_{k, 1} + \Zmbf_4^{n_c}\nabla \fmbf(\xmbf_{k+1, 1}) - \Zmbf_3^{n_c}\nabla \fmbf(\xmbf_{k, 1})) \\
    &\quad + \left(I_{nd} - \Imbf\right)(\Zmbf_3^{n_c}\nabla \fmbf(\xmbf_{k, n_g}) - \Zmbf_4^{n_c} \nabla \fmbf(\xmbf_{k, n_g}))\\
    & = \left(\Zmbf_3^{n_c} - \Imbf\right)(\ymbf_{k, 1} - \ybb_{k, 1}) + \left(\Zmbf_4^{n_c} - \Imbf\right)(\nabla \fmbf(\xmbf_{k+1, 1}) - \nabla \fmbf(\xmbf_{k, n_g})) \\
    & \quad + \left(\Zmbf_3^{n_c} - \Imbf\right)(\nabla \fmbf(\xmbf_{k, n_g}) - \nabla \fmbf(\xmbf_{k, 1}) )
\end{align*}
By \cref{asum.convex and smooth} and \eqref{eq : beta and Z},
\begin{align*}
    &\|\ymbf_{k + 1, 1} - \ybb_{k+1, 1}\|_2 \\
    &\leq \beta_3^{n_c}\|\ymbf_{k, 1} - \ybb_{k, 1}\|_2 + \beta_4^{n_c} L\|\xmbf_{k+1, 1} - \xmbf_{k, n_g}\|_2 + \beta_3^{n_c} L\|\xmbf_{k, n_g} - \xmbf_{k, 1}\|_2 \\
    &= \beta_3^{n_c}\|\ymbf_{k, 1} - \ybb_{k, 1}\|_2 + \beta_4^{n_c} L\|(\Zmbf_1^{n_c} - I_{nd})(\xmbf_{k, n_g} - \xbb_{k, n_g}) - \alpha \Zmbf_2^{n_c} \ymbf_{k, n_g}\|_2   \\
    & \quad+ \beta_3^{n_c} L\|\xmbf_{k, n_g} - \xmbf_{k, 1}\|_2\\
    &\leq \beta_3^{n_c}\|\ymbf_{k, 1} - \ybb_{k, 1}\|_2 + \beta_4^{n_c} L\|\Zmbf_1^{n_c} - I_{nd}\|_2\|\xmbf_{k, n_g} - \xbb_{k, n_g}\|_2 + \alpha \beta_4^{n_c} L \|\Zmbf_2^{n_c}\|_2\| \ymbf_{k, n_g}\|_2 \\
    & \quad  + \beta_3^{n_c} L\|\xmbf_{k, n_g} - \xmbf_{k, 1}\|_2 \\
    &= \beta_3^{n_c}\|\ymbf_{k, 1} - \ybb_{k, 1}\|_2 + \beta_4^{n_c} L\|\Zmbf_1^{n_c} - I_{nd}\|_2\|\xmbf_{k, n_g} - \xbb_{k, n_g}\|_2 \\
    & \quad + \alpha \beta_4^{n_c} L \| \ymbf_{k, 1} + \nabla \fmbf(\xmbf_{k, n_g}) - \nabla \fmbf(\xmbf_{k, 1})\|_2 + \beta_3^{n_c} L\|\xmbf_{k, n_g} - \xmbf_{k, 1}\|_2 \\
    &\leq \beta_3^{n_c}\|\ymbf_{k, 1} - \ybb_{k, 1}\|_2 + \beta_4^{n_c} L\|\Zmbf_1^{n_c} - I_{nd}\|_2\|\xmbf_{k, n_g} - \xbb_{k, n_g}\|_2 \\
    & \quad + \alpha \beta_4^{n_c} L \| \ymbf_{k, 1}\|_2 + \alpha \beta_4^{n_c} L^2 \|\xmbf_{k, n_g} - \xmbf_{k, 1}\|_2 + \beta_3^{n_c} L\|\xmbf_{k, n_g} - \xmbf_{k, 1}\|_2
\end{align*}
where the first equality follows from $\xmbf_{k+1, 1} = \Zmbf_1^{n_c} \xmbf_{k, n_g} - \alpha\Zmbf_2^{n_c}\ymbf_{k, n_g}$ and $- (\Zmbf_1^{n_c} - I_{nd})\xbb_{k, n_g} = 0$, the second inequality is by the triangle inequality, the second equality follows by 
\eqref{eq: v telescope}, and the last inequality is an application of triangle inequality and \cref{asum.convex and smooth}. By \eqref{eq:iterate_deviation}, \eqref{eq:consensus_error_deviation} and $\alpha L \leq \frac{1}{n_g}$, it follows,
\begin{align*}
    &\|\ymbf_{k + 1, 1} - \ybb_{k+1, 1}\|_2 \\
    \leq &\beta_3^{n_c}\|\ymbf_{k, 1} - \ybb_{k, 1}\|_2 + \beta_4^{n_c} L\|\Zmbf_1^{n_c} - I_{nd}\|_2 \|\xmbf_{k, 1} - \xbb_{k, 1}\|_2 \\
    &  + \left( \alpha \beta_4^{n_c} L + 2\alpha (n_g-1) L \left( \beta_4^{n_c} \|\Zmbf_1^{n_c} - I_{nd}\|_2 + \tfrac{\beta_4^{n_c}}{n_g}  + \beta_3^{n_c} \right) \right) \|\ymbf_{k, 1}\|_2.
\end{align*}
Substituting \eqref{eq:y_bound} yields the desired bound for the consensus error in $\ymbf_{k,1}$.
\eproof
\qed

\cref{lem:lyapunov g > 1} quantifies the progression of error vector $r_k$ using the matrix $B(n_c, n_g)$, similar to \cref{lem:lyapunov g = 1} but now allowing for multiple computation steps. Notice that when $n_g = 1$, \cref{lem:lyapunov g > 1} reduces to \cref{lem:lyapunov g = 1}, making it a special case of this analysis. We split the matrix $B(n_c, n_g)$ into the matrices $A(n_c, n_g)$ and $E(n_c, n_g)$. The latter matrix is characterized by the terms $\delta_1(n_c, n_g)$ and $\delta_2(n_c, n_g)$. 
We now define the explicit form of $B(n_c, n_g)$ 
for the methods defined in \cref{tab: Algorithm Def}.

\bcorollary \label{col. B special cases}
Suppose the conditions of \cref{lem:lyapunov g > 1} are satisfied. Then, the matrices $A(n_c, n_g)$ for the methods described in \cref{tab: Algorithm Def} are defined as:
\begin{equation} \label{eq : g > 1 algos A}
    \begin{aligned}
        \mbox{\texttt{GTA-1}:} & \quad A_1({n_c, n_g}) = \begin{bmatrix}
        (1 - \alpha \mu)^{n_g}& \frac{\kappa}{\sqrt{n}}(1 - (1 - \alpha\mu)^{n_g}) & 0\\
        0 & \beta^{n_c} & \alpha\left((n_g-1)\beta^{n_c} + 1\right)\\
        \sqrt{n}\alpha L^2 & L(2 + \alpha L) & \beta^{n_c} + \alpha L
    \end{bmatrix},\\
    \mbox{\texttt{GTA-2}:} & \quad A_2({n_c, n_g})  = \begin{bmatrix}
        (1 - \alpha \mu)^{n_g}& \frac{\kappa}{\sqrt{n}}(1 - (1 - \alpha\mu)^{n_g}) & 0\\
        0 & \beta^{n_c} & \alpha\beta^{n_c}n_g\\
        \sqrt{n}\alpha L^2 & L(2 + \alpha L) & \beta^{n_c} + \alpha  L
    \end{bmatrix}, \\
    \mbox{\texttt{GTA-3}:} & \quad A_3({n_c, n_g}) = \begin{bmatrix}
        (1 - \alpha \mu)^{n_g}& \frac{\kappa}{\sqrt{n}}(1 - (1 - \alpha\mu)^{n_g}) & 0\\
        0 & \beta^{n_c} & \alpha\beta^{n_c}n_g\\
        \sqrt{n}\alpha \beta^{n_c} L^2 & \beta^{n_c}L(2 + \alpha L) & \beta^{n_c}(1 + \alpha L)
    \end{bmatrix}.
    \end{aligned}
\end{equation}
The matrix $E(n_c, n_g)$ for the methods described in \cref{tab: Algorithm Def} is defined using the error terms $(\delta_1(n_c, n_g)$ and $\delta_2(n_c, n_g))$. 
The error terms for the methods described in \cref{tab: Algorithm Def} are defined in \cref{tab: n_g > 1 error terms special cases}.
\begin{table}[H]\centering
\caption{Error terms ($\delta_1(n_c, n_g)$ and $\delta_2(n_c, n_g)$) for \texttt{GTA-1}, \texttt{GTA-2} and \texttt{GTA-3}}\label{tab: n_g > 1 error terms special cases}
\begin{tabular}{lcc}\toprule
Method & $\delta_1(n_c, n_g)$ & $\delta_2(n_c, n_g)$ \vspace{2pt} \\ \hline \\[-8pt]
\texttt{GTA-1} & $2 + \beta^{n_c}(n_g - 2)$ & $2 \left( 2 + \tfrac{1}{n_g} + \beta^{n_c} \right)$ \vspace{2pt} \\ \hdashline \\[-10pt]
\texttt{GTA-2} & $n_g\beta^{n_c}$ & $2 \left( 2 + \tfrac{1}{n_g} + \beta^{n_c} \right)$ \vspace{2pt} \\ \hdashline \\[-10pt]
\texttt{GTA-3} & $ n_g\beta^{n_c} $ & $2 \beta^{n_c} \left( 3 + \tfrac{1}{n_g} \right)$ \vspace{2pt} \\
\bottomrule
\end{tabular}
\end{table}
\ecorollary
\bproof
Substituting the matrix values for each method in \eqref{eq : g > 1 general A} and bounding $\|\Zmbf_1^{n_c} - I_{nd}\|_2 \leq 2$ gives the desired result.
\eproof
\qed

\cref{col. B special cases} presents the explicit form of the matrices $B_i(n_c, n_g) = A_i(n_c, n_g) + \alpha L (n_g - 1) E_i(n_c, n_g)$ for $i = 1, 2, 3$, for each of the methods in \cref{tab: Algorithm Def}. The convergence properties of \texttt{GTA} can be analyzed using the spectral radius of $B(n_c, n_g)$. We now qualitatively establish the effect of the number of communication steps $n_c$ on $\rho(B(n_c, n_g))$ and a relative ordering for $\rho(B_1(n_c, n_g))$, $\rho(B_2(n_c, n_g))$ and $\rho(B_3(n_c, n_g))$.

\btheorem   \label{th.incr rates g > 1}
Suppose \cref{asum.convex and smooth} holds. If $\alpha \leq \frac{1}{L n_g}$ in \cref{alg : Deterministic}, then
as $n_c$ increases, $\rho(B(n_c, n_g))$ decreases where $B(n_c, n_g)$ is defined in \cref{lem:lyapunov g > 1}. Thus, as $n_c$ increases, $\rho(B_i(n_c, n_g))$ decreases for all $i =1, 2, 3$ defined in \cref{col. B special cases}. Moreover, if all three methods defined in \cref{tab: Algorithm Def} (\texttt{GTA-1}, \texttt{GTA-2} and \texttt{GTA-3}) employ the same step size,
\begin{align*}
\rho(B_1({n_c, n_g})) \geq \rho(B_2({n_c, n_g})) \geq \rho(B_3({n_c, n_g})).
\end{align*}
\etheorem

\bproof
Note that $A(n_c, n_g) \geq 0$ and $E(n_c, n_g) \geq 0$, thus $B(n_c, n_g) \geq 0$. Also, $A(n_c, n_g) \geq A(n_c + 1, n_g) $, $\delta_1(n_c, n_g) \geq \delta_1(n_c + 1, n_g)$, $\delta_2(n_c, n_g) \geq \delta_2(n_c + 1, n_g)$, thus $E(n_c, n_g) \geq E(n_c + 1, n_g)$ and $B(n_c, n_g) \geq B(n_c + 1, n_g)$.  By \cite[Corollary 8.1.19]{horn2012matrix}, it follows that $\rho(A(n_c, n_g)) \geq \rho(A(n_c + 1, n_g))$, $\rho(E(n_c, n_g)) \geq \rho(E(n_c + 1, n_g))$ and $\rho(B(n_c, n_g)) \geq \rho(B(n_c + 1, n_g))$. The same argument is applicable for $B_1({n_c, n_g})$, $B_2({n_c, n_g})$ and $B_3({n_c, n_g})$. Now, observe that $B_1({n_c, n_g}) \geq B_2({n_c, n_g}) \geq B_3({n_c, n_g}) \geq 0$ when the same step size is employed. Thus, again by \cite[Corollary 8.1.19]{horn2012matrix}, it follows that $\rho(B_1({n_c, n_g})) \geq \rho(B_2({n_c, n_g})) \geq \rho(B_3({n_c, n_g}))$.
\eproof
\qed

The effect of the number of computation steps $n_g$ on $\rho(B({n_c, n_g}))$ is not as clear as the effect of the number of communication steps $n_c$. Increasing $n_g$ increases all elements of the matrix $\alpha L (n_g - 1)E(n_c, n_g)$, while $(1 - \alpha \mu)^{n_g}$ in the matrix $A(n_c, n_g)$ decreases since $\alpha \leq \frac{1}{Ln_g}$. Thus, the effect of $n_g$ on $\rho(B(n_c, n_g))$ is not monotonic. 

We now derive conditions for establishing a 
linear rate of convergence for \cref{alg : Deterministic} with multiple communication and computation steps every iteration in terms of network parameters ($\beta_1, \beta_2, \beta_3, \beta_4$) and objective function parameters ($L, \mu, \kappa = \frac{L}{\mu}$).

\btheorem  \label{th. alpha bound g > 1}
Suppose \cref{asum.convex and smooth} holds and a finite number of computation steps are performed at each outer iteration of \cref{alg : Deterministic} (i.e., $1 \leq n_g < \infty$). If the matrix $B(n_c, n_g)$ is irreducible, $\beta_1, \beta_3 < 1$ and
\begin{align} \label{eq : alpha mult grads gen}
    \alpha < \min \left\{\tfrac{1}{n_g L}, \tfrac{\mu}{(2L^2 + \mu^2)(n_g - 1)}, \tfrac{1}{2L} \sqrt{\tfrac{3(1 - \beta_1^{n_c})}{\delta_1(n_c, n_g) (n_g - 1)}}, \tfrac{3(1 - \beta_3^{n_c})}{4L(\beta_4^{n_c} + \delta_2(n_c, n_g)(n_g - 1))}, \tfrac{ - b_2 + \sqrt{b_2^2 + 4b_1b_3}}{2b_1}\right\}
\end{align}
where
\begin{align*}
    b_1 =& \tfrac{\mu L^2 n_g}{2} \left[(n_g-1) \left( \beta_1^{n_c} + \delta_1(n_c, n_g)\right) + \beta_2^{n_c} \right]\left[\beta_4^{n_c} +  (n_g - 1)\delta_2(n_c, n_g)\right] \\
    &+L^3 n_g (n_g - 1) \left[ \delta_1(n_c, n_g)  \left(\tfrac{1 - \beta_3^{n_c}}{4}\right) + (\beta_4^{n_c} + (n_g - 1)\delta_2(n_c, n_g)) \left(\tfrac{1 - \beta_1^{n_c}}{4}\right) \right]\\
    &+ L^2 (n_g - 1)^2 \left[ L\delta_1(n_c, n_g)\left(3\beta_4^{n_c} + (n_g - 1)\delta_2(n_c, n_g)\right) +  \delta_1(n_c, n_g) \left(\tfrac{1 - \beta_3^{n_c}}{4}\right)\right] \\
    &+ L^2[ \beta_4^{n_c} + (n_g - 1)\delta_2(n_c, n_g)] \left[L n_g + (n_g - 1)\right] \left[(n_g-1) \left( \beta_1^{n_c} + \delta_1(n_c, n_g)\right) + \beta_2^{n_c} \right] \\
    b_2  = & \mu n_g \beta_4^{n_c}L \left((n_g-1)(\beta_1^{n_c} + \delta_1(n_c, n_g)) + \beta_2^{n_c} \right), \text{ and }\; b_3 = \tfrac{\mu n_g}{2} \left( \tfrac{1 - \beta_1^{n_c}}{4}\right) \left(\tfrac{1 - \beta_3^{n_c}}{4}\right)
\end{align*}
and $\delta_1(n_c, n_g)$ and $\delta_2(n_c, n_g)$ are defined in \eqref{eq:delta_errors}, 
then, for all $\epsilon > 0$ there exists a constant $C_{\epsilon} > 0$ such that, for all $k\geq 0$,
\begin{align*}
    \|r_{k}\|_2 \leq C_{\epsilon}(\rho(B({n_c, n_g})) + \epsilon)^k \|r_0\|_2, \quad \text{where } \; \rho(B({n_c, n_g})) < 1.
\end{align*} 
\etheorem
\bproof
By the binomial expansion of $(1-\alpha\mu)^{n_g}$ and the condition that $\alpha \leq \tfrac{1}{L n_g}$, it follows that $1 - \alpha \mu n_g \leq (1 - \alpha \mu)^{n_g} \leq 1 - \alpha \mu n_g + \alpha^2 \mu^2 \tfrac{n_g (n_g-1)}{2} $. Following a similar approach to \cite[Theorem 2]{pu2020push}, since the step size satisfies \eqref{eq : alpha mult grads gen}, the first, second and third diagonal terms of $B(n_c, n_g)$
can be upper bounded as
\begin{align*}
    (1 - \alpha \mu)^{n_g} + \alpha^2 L^2 n_g (n_g - 1)  \leq 1 - \alpha \mu n_g + \alpha^2 (L^2 + \tfrac{\mu^2}{2}) n_g (n_g - 1)& < 1 - \tfrac{\alpha \mu n_g}{2}, \\ 
    \beta_1^{n_c} + \alpha^2 L^2 (n_g - 1) \delta_1(n_c, n_g) & < \tfrac{3 + \beta_1^{n_c}}{4}, \\
    \beta_3^{n_c} + \alpha \beta_4^{n_c} L + \alpha L (n_g - 1) \delta_2(n_c, n_g) & < \tfrac{3 + \beta_3^{n_c}}{4}.
\end{align*}
With the above bounds, $(1-\alpha \mu)^{n_g} \geq 1 - \alpha\mu n_g$ and $\|\Zmbf_1^{n_c} - I_{nd}\| \leq 2$, we construct the $3\times 3$ matrix $\Tilde{B}(n_c, n_g)$ that has entries $\tilde{b}_{ij}$ defined as follows:
\begin{align*}
    &\Tilde{b}_{11} = 1 - \tfrac{\alpha \mu n_g}{2}, \quad \Tilde{b}_{12} = \tfrac{\alpha L n_g}{\sqrt{n}} \left(1 + \alpha L (n_g - 1)\right),\quad \Tilde{b}_{13} = \tfrac{\alpha^2 L n_g (n_g - 1)}{\sqrt{n}}\\
    &\Tilde{b}_{21} = \sqrt{n}\alpha^2 L^2 (n_g - 1) \delta_1(n_c, n_g), \quad \Tilde{b}_{22} = \tfrac{3 + \beta_1^{n_c}}{4},\\
    & \Tilde{b}_{23} = \alpha\left((n_g-1) (\beta_1^{n_c}  + \alpha L \delta_1(n_c, n_g)) + \beta_2^{n_c}\right), \\
    &\Tilde{b}_{31} = \sqrt{n}\alpha L^2 \left(\beta_4^{n_c} + (n_g - 1)\delta_2(n_c, n_g)\right), \\
    &\Tilde{b}_{32} = \beta_4^{n_c}L(2 + \alpha L) + \alpha L^2 (n_g - 1) \delta_2(n_c, n_g), \quad \Tilde{b}_{33} = \tfrac{3 + \beta_3^{n_c}}{4},
\end{align*}
such that $0 \leq B(n_c, n_g) \leq \Tilde{B}(n_c, n_g)$ and by \cite[Corollary 8.1.19]{horn2012matrix}, $\rho(B(n_c, n_g)) \leq \rho(\Tilde{B}(n_c, n_g))$. Following \cite[Lemma 5]{pu2021distributed} derived from the Perron-Forbenius Theorem \cite[Theorem 8.4.4]{horn2012matrix} for a $3\times3$ matrix, when the matrix $\Tilde{B}(n_c, n_g)$ is nonnegative and irreducible, it is sufficient to show that the diagonal elements of $\Tilde{B}(n_c, n_g)$ are less than one and $\det(I_3 - \Tilde{B}(n_c, n_g)) > 0$ in order to guarantee $\rho(\Tilde{B}(n_c, n_g)) < 1$ which suffices to show $\rho(B(n_c, n_g)) < 1$. 

Consider the diagonal elements of the matrix $\Tilde{B}(n_c, n_g)$. The first element is $1 - \frac{\alpha \mu n_g}{2} \leq 1 - \frac{\mu}{2L} < 1$ by \eqref{eq : alpha mult grads gen}. The second element is $\frac{3 + \beta_1^{n_c}}{4} < 1$ as $\beta_1 < 1$. Finally the third element is $\frac{3 + \beta_3^{n_c}}{4} < 1$ as $\beta_3 < 1$. Next, let us consider,
\begin{align*}
    &\det(I_3 - \Tilde{B}(n_c, n_g)) \\
    =& \tfrac{\alpha \mu n_g}{2} \left( \tfrac{1 - \beta_1^{n_c}}{4}\right) \left(\tfrac{1 - \beta_3^{n_c}}{4}\right) - \alpha^3 L^3 n_g (n_g - 1)\left[\beta_4^{n_c}  \left(\tfrac{1 - \beta_1^{n_c}}{4}\right) + \delta_1(n_c, n_g)  \left(\tfrac{1 - \beta_3^{n_c}}{4}\right)\right]\\
    &- \tfrac{\alpha^2 \mu L n_g}{2} \left[(n_g-1)(\beta_1^{n_c}+ \alpha L \delta_1(n_c, n_g)) + \beta_2^{n_c} \right]\left[\beta_4^{n_c}(2 + \alpha L) + \alpha L (n_g - 1)\delta_2(n_c, n_g)\right] \\
    &-\alpha^4 L^4 n_g (n_g - 1)^2\delta_1(n_c, n_g) \left[2\beta_4^{n_c} + \alpha L \left( \beta_4^{n_c} + (n_g - 1)\delta_2(n_c, n_g)\right) \right] \\
    & - \alpha^3 L^3 n_g\left(1 + \alpha (n_g - 1)\right)\left[ \beta_4^{n_c}  +(n_g - 1)\delta_2(n_c, n_g)\right] \left[(n_g-1) (\beta_1^{n_c} + \alpha L \delta_1(n_c, n_g)) + \beta_2^{n_c} \right] \\
    & - \alpha^3 L^3 n_g (n_g - 1)^2\left[ \delta_2(n_c, n_g)  \left(\tfrac{1 - \beta_1^{n_c}}{4}\right) + \alpha \delta_1(n_c, n_g)  \left(\tfrac{1 - \beta_3^{n_c}}{4}\right)\right]\\  
    \geq& \alpha(- b_1 \alpha^2 - b_2 \alpha + b_3) = -b_1 \alpha(\alpha - \alpha_l)(\alpha - \alpha_u)
\end{align*}
where the inequality is due to $\alpha L n_g \leq 1$ and thus $\alpha L \leq 1$ as $n_g \geq 1$, and 
\begin{align*}
\alpha_l = \tfrac{-b_2 - \sqrt{b_2^2 + 4b_1 b_3}}{2b_1} \quad \text{and} \quad 
\alpha_u = \tfrac{-b_2 + \sqrt{b_2^2 + 4b_1 b_3}}{2b_1}. 
\end{align*}
Observe that $\alpha_l < 0 < \alpha_u$ since $b_1, b_2, b_3 \geq 0$. From \eqref{eq : alpha mult grads gen}, we have $0<\alpha < \alpha_u$. Therefore, $\det(I_3 - \Tilde{B}({n_c, n_g})) > 0$, which combined with the fact that the diagonal elements of the matrix are less than 1, implies $\rho(B(n_c, n_g)) \leq \rho(\Tilde{B}(n_c, n_g)) < 1$.

Finally, we bound the norm of error vector $\|r_k\|_2$ by telescoping $r_{i+1} \leq B(n_c, n_g) r_{i}$ from $i = 0$ to $k-1$ and triangle inequality as
\begin{align*}
    \|r_{k}\|_2 &\leq \|B(n_c, n_g)^k\|_2\|r_0\|_2.
\end{align*}
From \cite[Corollary 5.6.13]{horn2012matrix}, we can bound $\|B(n_c, n_g)^k\|_2 \leq C_{\epsilon}(\rho(B(n_c, n_g)) + \epsilon)^k$ where $\epsilon > 0$ and $C_{\epsilon}$ is a positive constant depending on $B(n_c, n_g)$ and $\epsilon$.
\eproof
\qed

\bremark 
Linear convergence can be established for the iterates generated by \cref{alg : Deterministic} by treating inner iterations as a special case of of time-varying networks and following the analysis techniques in \cite{nedic2017geometrically, nedic2017achieving, nguyen2022performance}. Such analysis would ensure descent in each inner iteration and require that the step size satisfy a pessimistic $\mathcal{O}(\frac{1}{n_g^2})$ condition. Our analysis takes a different approach; we quantify the error across outer iterations, and as a result, the condition on the step size is less pessimistic, i.e., $\mathcal{O}(\frac{1}{n_g})$. That said, the analysis involves a complicated error recursion that makes it difficult to explicitly quantify the rate of convergence of \cref{alg : Deterministic}.
\eremark

Similar to \cref{th. general g=1 rate bound}, the only constraint \cref{th. alpha bound g > 1} imposes on the system is $\beta_1, \beta_3 < 1$. This implies the communication matrices $\Wmbf_1$ and $\Wmbf_3$ must represent connected networks (not necessarily the same network) even when multiple communication and multiple computation steps are performed. \cref{th. alpha bound g > 1} does not impose any restrictions on the relation among $\Wmbf_1$, $\Wmbf_2$, $\Wmbf_3$ and $\Wmbf_4$. Thus, it allows for more flexibility than the structures considered in the literature even when multiple communication and multiple computation steps are performed.  
\cref{th. alpha bound g > 1} uses a relaxation of the original matrix $B(n_c, n_g)$ to provide a more pessimistic step size condition than required. But observe, when $n_g = 1$, \eqref{eq : alpha mult grads gen} recovers the $\mathcal{O}(L^{-1}\kappa^{-0.5})$ step size condition of \cref{th. general g=1 step cond}, suggesting it might not be very pessimistic.

Based on \cref{th. alpha bound g > 1}, the step size conditions for methods described in \cref{tab: Algorithm Def} can be derived. We omit these conditions as they are complex and do not offer any additional insights. 
We also omit the counterpart to \cref{th. general g=1 rate bound} as the matrix $B(n_c, n_g)$ is now a dense matrix, thus any such bounds are again highly complex and do not offer strong insights into the effects of communication and computation on the convergence rate. If $B(n_c, n_g)$ is a reducible matrix, the analysis for the progression of $r_k$ can be further simplified from \cref{lem:lyapunov g > 1}. The analysis for this case is presented in \cref{sec.full graph res} with the examples of \texttt{GTA-2} and \texttt{GTA-3} when $\Wmbf = \frac{1_n1_n^T}{n}$, i.e., $\beta= 0$.


\subsection{Fully connected network} \label{sec.full graph res}
In this section, we analyze the methods defined in \cref{tab: Algorithm Def} under a fully connected network. While showing linear convergence of \texttt{GTA} in \cref{th. alpha bound g > 1}, we assume $B(n_c, n_g)$ is an irreducible matrix. When the network is fully connected, i.e., $\Wmbf = \tfrac{1_n1_n^T}{n}$ and $\beta = 0$, the assumption does not hold for \texttt{GTA-2} and \texttt{GTA-3} as the matrices  $B_2(n_c, n_g)$ and $B_3(n_c, n_g)$ defined by \cref{col. B special cases} are reducible. For \texttt{GTA-1}, such an issue does not arise as $A_1(n_c, n_g)$ defined in \cref{col. B special cases} is irreducible for all $\beta \in [0,1]$. Thus, we now present sufficient conditions for linear rate of convergence and the convergence rate for \texttt{GTA-2} and \texttt{GTA-3} for the special case of fully connected networks.

\btheorem \label{th. fully connected}
Suppose \cref{asum.convex and smooth} holds, $\Wmbf = \frac{1_n1_n^T}{n}$ and a finite number of computation steps are performed each outer iteration of \texttt{GTA-3} defined in \cref{tab: Algorithm Def} (i.e., $1 \leq n_g < \infty$). If $\alpha < \min \left\{\frac{\mu}{(2L^2 + \mu^2)(n_g-1)}, \frac{1}{L n_g}\right\}$, then for all $k \geq 0$,
\begin{align*}
    \|\xbar_{k+1, 1} - x^*\|_2 & \leq \left(
    (1 - \alpha \mu)^{n_g} + \alpha^2 L^2 n_g(n_g - 1) \right)  \|\xbar_{k, 1} - x^*\|_2.
\end{align*}
Moreover, suppose the number of computation steps performed each outer iteration of \texttt{GTA-2} and \texttt{GTA-3} defined in \cref{tab: Algorithm Def} is set to one (i.e., $n_g =1$). If $\alpha \leq \frac{1}{L}$, then for both the methods, for all $k \geq 0$,
\begin{align*}
    \|\xbar_{k+1, 1} - x^*\|_2 & \leq 
    (1 - \alpha \mu) \|\xbar_{k, 1} - x^*\|_2.
\end{align*}
\etheorem
\bproof
When we substitute $\beta = 0$ in \cref{col. B special cases} as $\alpha < \frac{1}{n_g L}$, the matrices $B_2(n_c, n_g)$ and $B_3(n_c, n_g)$ now have rows of zeros that make them reducible. Thus, we reduce these matrices by ignoring the error terms corresponding to the row of zeros. This yields the following systems for the progression of errors in these methods,
\begin{align}
    &\mbox{\texttt{GTA-2: }}
     \Tilde{r}_{k+1} \label{eq : g > 1 reduced 2}    
    \leq \begin{bmatrix}
    (1 - \alpha \mu)^{n_g} + \alpha^2 L^2 n_g(n_g - 1) & \frac{\alpha^2 L n_g(n_g - 1)}{\sqrt{n}}\\
    \sqrt{n}\alpha L^2 \Tilde{\delta}(n_c, n_g) & \alpha L\Tilde{\delta}(n_c, n_g)\\
    \end{bmatrix} \Tilde{r}_{k}, \\
    &\mbox{\texttt{GTA-3: }}
        \|\xbar_{k+1, 1} - x^*\|_2  \leq \left(
    (1 - \alpha \mu)^{n_g} + \alpha^2 L^2 n_g(n_g - 1) \right)
        \|\xbar_{k, 1} - x^*\|_2, \label{eq : g > 1 reduced 3} 
\end{align}
where $\Tilde{\delta}(n_c, n_g) = 1 + 2(n_g - 1)\left(2 + \tfrac{1}{n_g}\right)$ and $\Tilde{r}_{k} = \begin{bmatrix}
        \|\xbar_{k, 1} - x^*\|_2\\
        \|\ymbf_{k, 1} - \Bar{\ymbf}_{k, 1}\|_2\\
    \end{bmatrix}$. \\
    By $\alpha < \frac{\mu}{(2L^2 + \mu^2)(n_g-1)}$ and $(1-\alpha\mu)^{n_g} \leq 1 - \alpha\mu n_g + \alpha^2\mu^2 \tfrac{n_g(n_g-1)}{2}$ from \cref{th. alpha bound g > 1},
\begin{align*}
    (1 - \alpha \mu)^{n_g} + \alpha^2 L^2 n_g(n_g - 1) \leq 1 - \alpha \mu n_g + \alpha^2 \left(L^2 + \tfrac{\mu^2}{2}\right) n_g(n_g - 1) < 1,
\end{align*}
and thus the result for \texttt{GTA-3} follows. When the number of computation steps performed each outer iteration is set to one, i.e., $n_g = 1$, 
the result for \texttt{GTA-3} follows by substituting $n_g=1$ in \eqref{eq : g > 1 reduced 3}, where $1 - \alpha\mu < 1$ as $\alpha \leq \frac{1}{L}$. Substituting $n_g=1$ in \eqref{eq : g > 1 reduced 2} for \texttt{GTA-2} yields,  $\Tilde{r}_{k+1} \leq \begin{bmatrix}
    1 - \alpha \mu & 0\\
    \sqrt{n}\alpha L^2 & \alpha L\\
    \end{bmatrix}
        \Tilde{r}_{k}$, 
where the bound on optimization error is independent of the consensus error in $\ymbf_{k, 1}$.
Thus, we obtain $\|\xbar_{k+1, 1} - x^*\|_2 \leq \left(1 - \alpha \mu \right) \|\xbar_{k, 1} - x^*\|_2$ for \texttt{GTA-2}.
\eproof
\qed

By \cref{th. fully connected}  if the network is fully connected and a single computation step is performed, i.e., $n_g = 1$, \texttt{GTA-2} and \texttt{GTA-3} display gradient descent performance. For \texttt{GTA-2}, when the network is fully connected and $n_g > 1$, the convergence rate can be expressed as the spectral radius of the $2\times2$ matrix in \eqref{eq : g > 1 reduced 2}.


\section{Numerical Experiments}\label{sec.num_exp}

\setcounter{footnote}{1} 
In this section, we 
illustrate the empirical performance of the methods defined in \cref{tab: Algorithm Def} using Python implementations\footnote{Our code will be made publicly available upon publication of the manuscript. Github repository: \url{https://github.com/Shagun-G/Gradient-Tracking-Algorithmic-Framework}. Moreover, additional extensive numerical results can be found in the same repository.}. 
The aim of this section is to show, over multiple problems, that different communication strategies and the balance between communication and computation steps 
can substantially effect the algorithm's performance. Specifically, we establish the relative performance of the methods defined in \cref{tab: Algorithm Def} and illustrate the benefits of the flexibility in terms of communication and computation steps.

We present results on two problems: $(1)$ a synthetic strongly convex quadratic problem (\cref{sec : quads}); and, $(2)$ binary classification logistic regression problems over the mushroom and australian datasets \cite{Dua:2019} (\cref{sec : logistic}). We 
investigated 
two  
network structures (different mixing matrix $\Wmbf$) with $n = 16$ nodes: 
$(1)$ a connected cyclic network ($\beta = 0.992$) where all nodes have two neighbours; and, $(2)$ a connected star network ($\beta = 0.95$) where all nodes are connected to a single central node. 
Both networks have low connectivity (i.e., high $\beta$). We should note that the performance of \cref{alg : Deterministic} with multiple communication steps is equivalent to the performance over a network with higher connectivity (i.e., lower $\beta$).

The methods defined in \cref{tab: Algorithm Def} 
are denoted 
as \texttt{GTA}$-i(n_c, n_g)$, $i = 1, 2, 3$, where $n_c$ and $n_g$ are the number of communication and computation steps, respectively. We 
tested 5 values of $n_c$ and $n_g$ for each of the methods; 
$n_c \in \{1, 5, 10, 50, 100\}$ and $n_g \in \{ 1, 5, 20, 50, 100\}$. We compared the  performance of 
popular gradient tracking methods, which are special cases of our generalized framework. 
The step sizes were tuned over the set $\{2^{-t} | t = 0, 1, 2, .., 20\}$ for all algorithms and problems, and the initial iterates for all algorithms, problems and nodes were set to the zero vector (i.e., $\xmbf_k = \mathbf{0}$). 
The performance of the methods was measured 
in terms of the optimization error ($\|\bar{x}_k - x^*\|_2$) and the consensus error ($\|\xmbf_k - \xbb_k\|_2$). We do not report 
the consensus error in the auxiliary variable $\ymbf_k$ ($\|\ymbf_k - \ybb_k\|_2$) as this measure does not provide any significant additional insights about the performance of the algorithms.  
The optimal solution $x^*$ for quadratic problem was obtained analytically and for the logistic regression problems was obtained by
running gradient descent in the centralized setting to high accuracy, i.e., $\|\nabla f(x^*)\|_2 \leq 10^{-12}$. 

\subsection{Quadratic Problems}\label{sec : quads}

We first consider quadratic problems
\begin{align*} 
    f(x) = \frac{1}{n} \sum_{i=1}^n \frac{1}{2}x^TQ_ix + b_i^Tx,
\end{align*}
where $Q_i \in \mathbb{R}^{10 \times 10}$, $Q_i \succ 0$ and $b_i \in \mathbb{R}^{10}$ is the local information at each node $i \in \{1, 2, .., n\}$, and $n=16$. Each local problem is strongly convex and was generated using the procedure described in \cite{mokhtari2016network}, with global condition number $\kappa \approx 10^4$. 

\cref{fig : Quadratic cyclic,fig : Quadratic Star} show the performance of \texttt{GTA-1}, \texttt{GTA-2} and \texttt{GTA-3} over a cyclic network and a star network, respectively. Our first observation, from the iteration plots in both the figures, is that the optimization error and consensus error converge at a linear rate for all methods, matching the theoretical results of \cref{sec.theory}. Moreover, improvements in the rates of convergence of all methods are observed as a result of the flexibility in terms of the number of communication and computation steps. Specifically, the consensus error is improved (and on par optimization error) when multiple communication steps with single computation step are performed (see \texttt{GTA-i}($1$, 1) vs. \texttt{GTA-i}($n_c$, 1) lines), and the optimization error is improved (and on par consensus error) when multiple computation steps with same number of communication steps are performed (see \texttt{GTA-i}($n_c$, 1) vs. \texttt{GTA-i}($n_c$, $n_g$) lines). 
These observations match the theory presented in Section~\ref{sec.mult grads}. 
That being said, 
these improvements come at a higher cost in terms of total communication or computation steps, respectively, and an optimal choice of $(n_c, n_g)$ depends on the exact cost structure that combines the complexity of both these steps; see e.g., \cite{berahas2018balancing}. Finally, we also observe that \texttt{GTA-2} and \texttt{GTA-3} outperform \texttt{GTA-1} in terms of optimization error and achieve similar consensus error. The performance of \texttt{GTA-2} and \texttt{GTA-3} is very similar for this problem, we suspect the reason for this behavior is due to the large $\beta$ and the high condition number ($\kappa \approx 10^4$) that dominate the rate constant; see \cref{col. g=1 rate bound}. 

\begin{figure}[h]
\centering
\includegraphics[width=\textwidth]{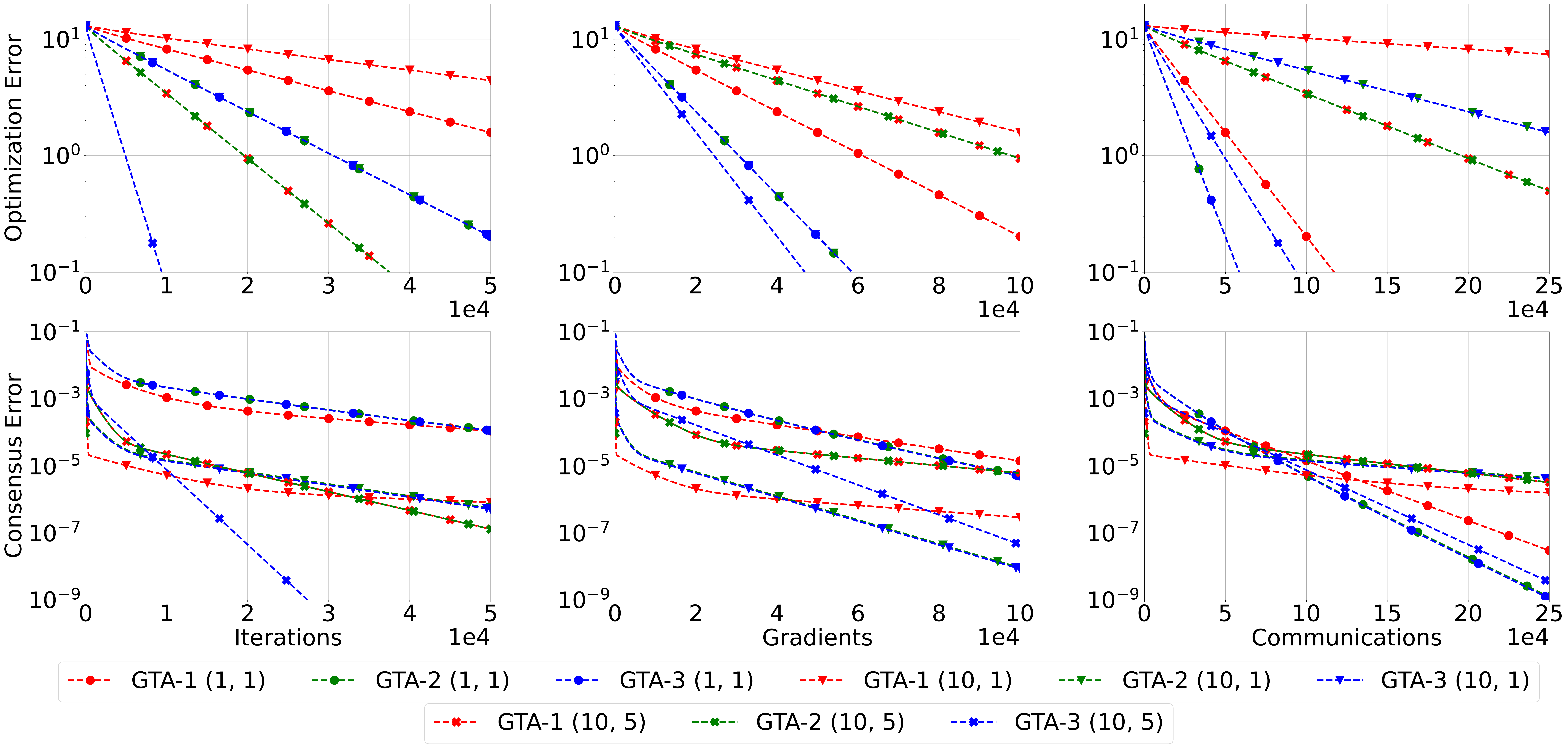}  
\caption{Optimization Error ($\|\xbar_k - x^*\|_2$) and Consensus Error ($\|\xmbf_k - \xbb_k\|_2$) of \texttt{GTA-1}, \texttt{GTA-2} and \texttt{GTA-3} with respect to number of iterations, communications and gradient evaluations for a synthetic quadratic problem ($n = 16$, $d = 10$, $\kappa = 10^4$) over a cyclic network ($\beta =  0.992$).}
\label{fig : Quadratic cyclic}
\end{figure}

\begin{figure}[h]
\centering
\includegraphics[width=\textwidth]{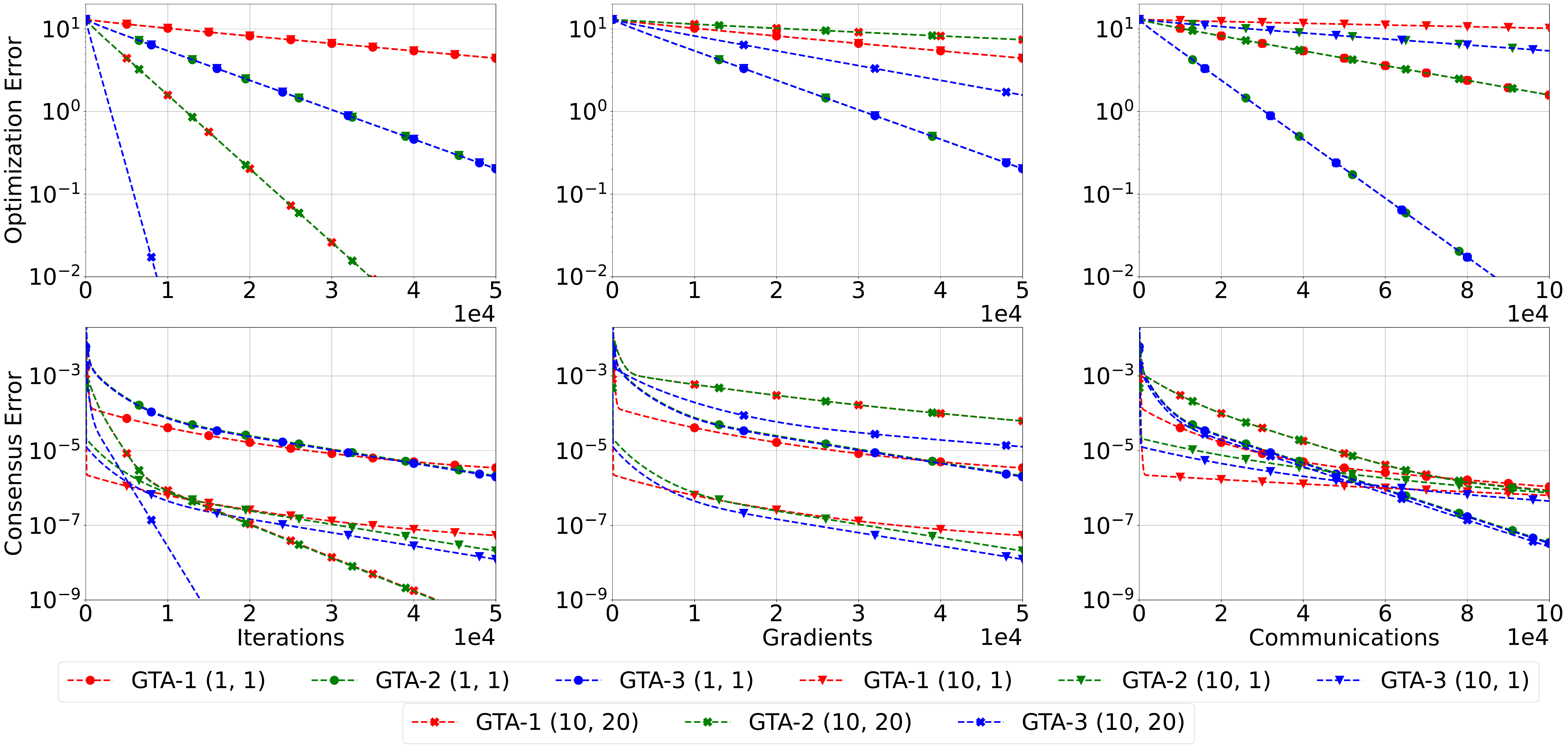}    
\caption{Optimization Error ($\|\xbar_k - x^*\|_2$) and Consensus Error ($\|\xmbf_k - \xbb_k\|_2$) of \texttt{GTA-1}, \texttt{GTA-2} and \texttt{GTA-3} with respect to number of iterations, communications and gradient evaluations for a synthetic quadratic problem ($n = 16$, $d = 10$, $\kappa = 10^4$) over star network($\beta =  0.95$).}
\label{fig : Quadratic Star}
\end{figure}

\subsection{Binary Classification Logistic Regression}\label{sec : logistic}
Next, we consider $\ell_2$-regularized binary classification logistic regression problems of the form
\begin{align*} 
    f(x) &= \frac{1}{n} \sum_{i=1}^n \frac{1}{n_i}\log(1 + e^{-b_i^TA_ix}) + \frac{1}{n_i}\|x\|_2^2, 
\end{align*}
where each node $i \in \{1, 2, .., n\}$ has a portion of data samples $A_i \in \mathbb{R}^{n_i \times d}$ and corresponding labels $b_i \in \{0, 1\}^{n_i}$. Experiments were performed over the mushroom dataset ($n = 16$, $d = 117$, $\sum_{i=1}^n n_i = 8124$) and the australian dataset ($n = 16$, $d = 41$, $\sum_{i=1}^n n_i = 690$) \cite{Dua:2019}. 

\cref{fig : Mushroom Cyclic,fig : Austrailian Star} show the performance of \texttt{GTA-1}, \texttt{GTA-2} and \texttt{GTA-3} over a cyclic network ($\beta =  0.992$) for the mushroom dataset and a star network for the australian dataset ($\beta =  0.95$), respectively. Similar observations  to those made for the quadratic problem with respect to the effect of performing multiple communication and computation steps can also be made for these problems. 
Additionally, we observe that \texttt{GTA-3} outperforms \texttt{GTA-2} on these problems. 
We should note that although \texttt{GTA-3} performs the best within these experiments, it also brings certain implementation constraints; see \cref{sec.methods}.

\begin{figure}[]
\centering
\includegraphics[width=\textwidth]{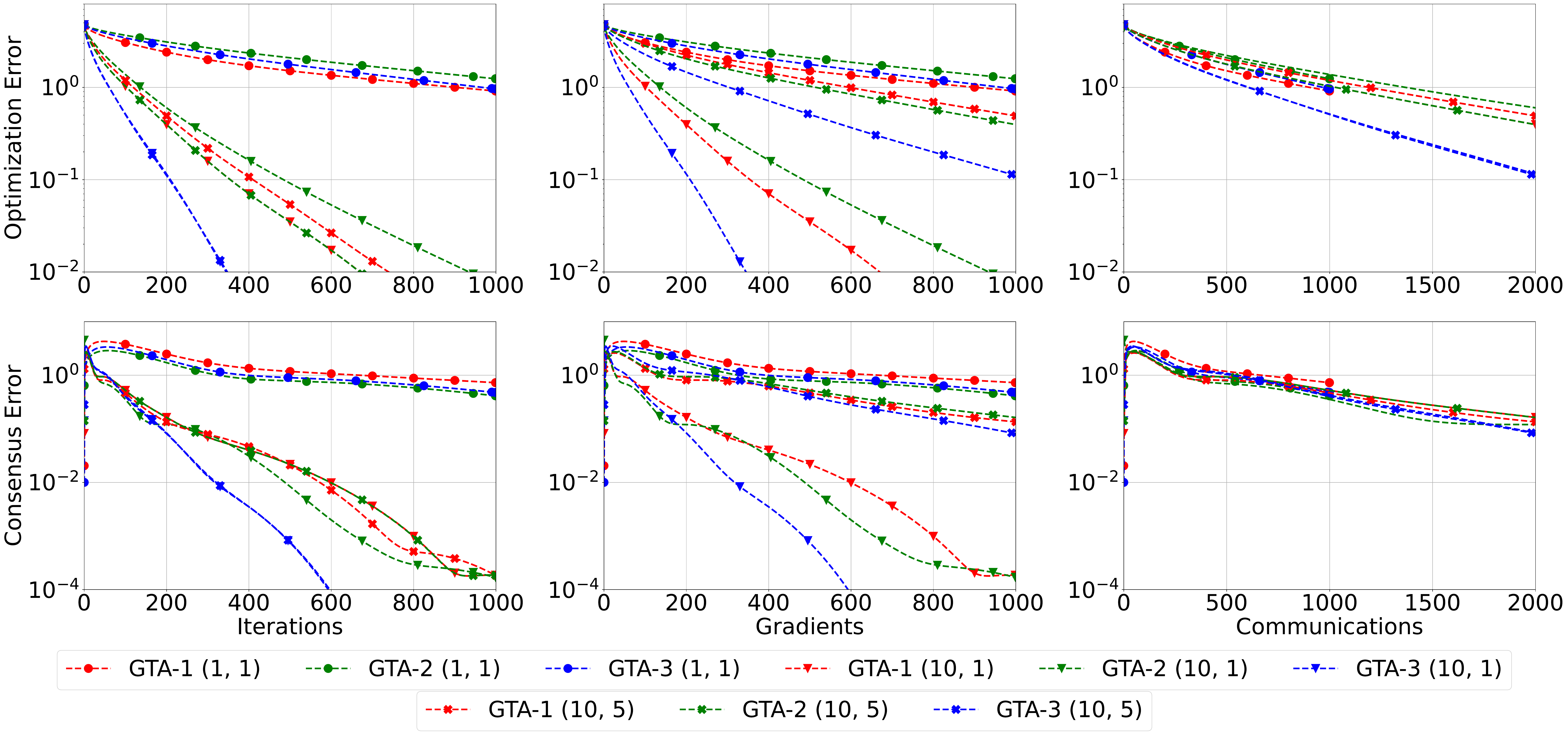} 
\caption{Optimization Error ($\|\xbar_k - x^*\|_2$) and Consensus Error ($\|\xmbf_k - \xbb_k\|_2$) of \texttt{GTA-1}, \texttt{GTA-2} and \texttt{GTA-3} with respect to number of iterations, communications and gradient evaluations for binary logistic regression on Mushroom dataset ($n = 16$, $d = 117$, $\sum_{i=1}^n n_i = 8124$) over cyclic network ($\beta =  0.992$).}
\label{fig : Mushroom Cyclic}
\end{figure}

\begin{figure}[]
\centering
\includegraphics[width=\textwidth]{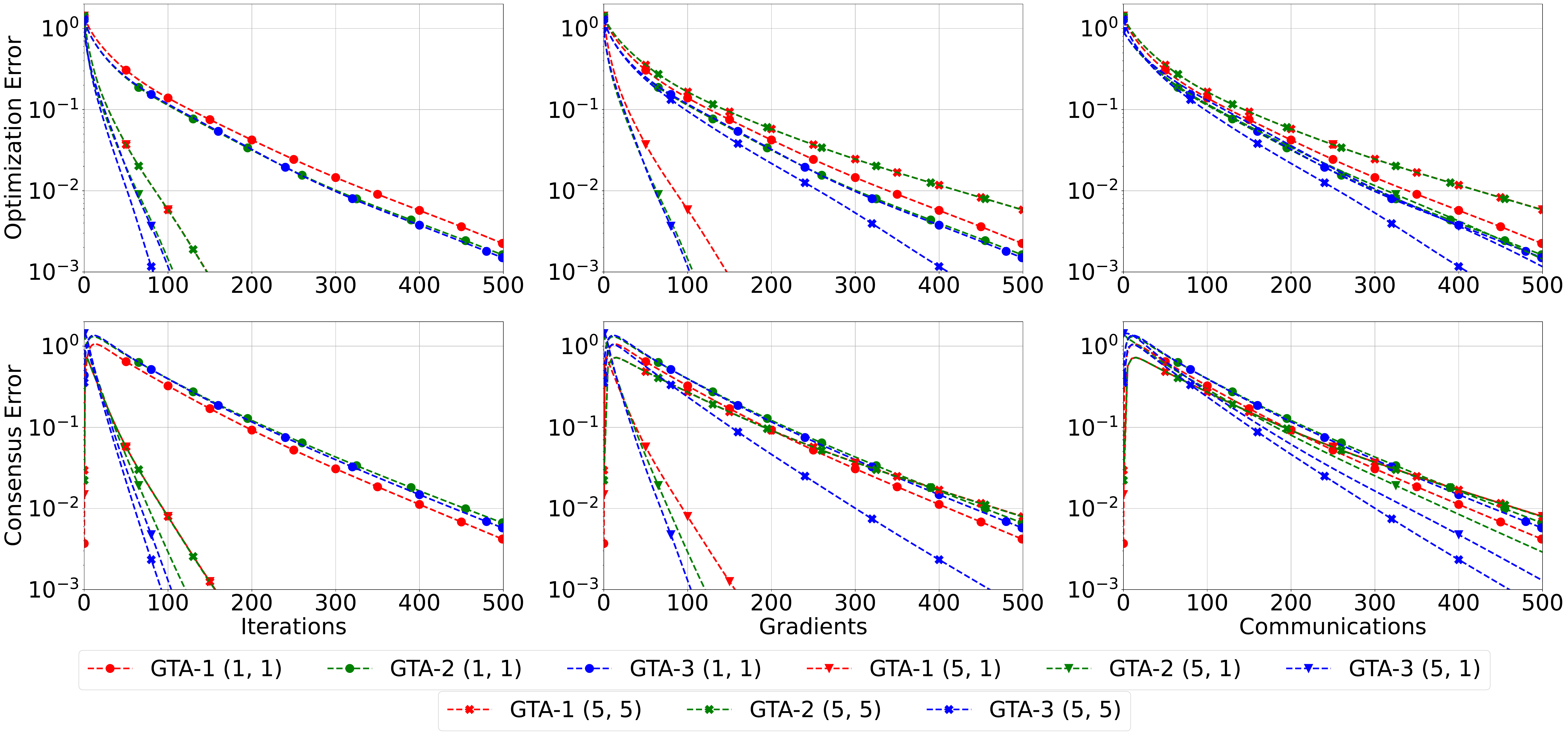}  
\caption{Optimization Error ($\|\xbar_k - x^*\|_2$) and Consensus Error ($\|\xmbf_k - \xbb_k\|_2$) of \texttt{GTA-1}, \texttt{GTA-2} and \texttt{GTA-3} with respect to number of iterations, communications and gradient evaluations for binary logistic regression on Australian dataset ($n = 16$, $d = 41$, $\sum_{i=1}^n n_i = 690$) over star network ($\beta = 0.95$).
}
\label{fig : Austrailian Star}
\end{figure}
  

\section{Final Remarks}\label{sec.conc}

In this paper, we have proposed a framework that unifies and generalizes communication strategies in gradient tracking methods with flexibility in the number of communication and computation steps performed at every iteration. We have established convergence guarantees for the proposed gradient tracking framework. Specifically, we have shown linear convergence for the general framework and the special cases of gradient tracking methods. Moreover, we have shown the positive influence of performing multiple communication steps at every iteration on the convergence rate and provide results that allow for the direct comparison of popular gradient tracking methods. Our experiments on quadratic and logistic regression problems illustrate the effects of different communication strategies and the benefits of the flexibility in terms of iterations and number of communication and computation steps. The advantages of the proposed framework can be further realized when the actual cost, i.e., a combination of the complexity of both communication and computation steps that is application specific, is considered. 
Finally, the algorithmic framework can be extended to other interesting settings such as nonconvex problems, stochastic local information, asynchronous updates, and higher-order approaches.

\newpage
\bibliographystyle{plain}
{\small
\bibliography{references}}


\end{document}